\definecolor{mygreen}{rgb}{0.01,0.6,0.2}
\definecolor{myblue}{rgb}{0.01, 0.18, 1.0}
\newtheorem{theorem}{Theorem}
\newtheorem{lemma}[theorem]{Lemma}
\theoremstyle{definition}
\newtheorem{definition}[theorem]{Definition}
\newtheorem{remark}[theorem]{Remark}
\numberwithin{equation}{section}
\numberwithin{theorem}{section}
\numberwithin{equation}{section}
\numberwithin{theorem}{section}
\title[Fractional critical systems with mixed boundary]{Fractional critical systems with mixed boundary conditions}
\author{Rohit Kumar$^{\dagger}$}
\address[R. Kumar]{Tata Institute of Fundamental Research-Centre For Applicable Mathematics, Bangalore 560065, India}
\email{rohit24@tifrbng.res.in,rohit1.iitj@gmail.com}
\author{Alejandro Ortega}
\address[A. Ortega]{Dpto. de Matem\'aticas Fundamentales, Facultad de Ciencias, UNED, 28040 Madrid, Spain}
\email{\tt alejandro.ortega@mat.uned.es}
\keywords{Fractional Laplacian, Critical Elliptic Systems, Mixed Boundary Data, Variational Methods.\\
\phantom{aa} 2010 AMS Subject Classification: Primary: 35J50, 35B33, 35R11, 35S15; Secondary: 35J61, 35Q55}
\DeclarePairedDelimiter\abs{\lvert}{\rvert}
\DeclarePairedDelimiter\norm{\lVert}{\rVert}
\let\oldnorm\norm
\def\norm{\@ifstar{\oldnorm}{\oldnorm*}}
\DeclareMathAlphabet{\mathpzc}{T1}{pzc}{m}{it}
\definecolor{green(html/cssgreen)}{rgb}{0.0, 0.5, 0.0}
\def\R{{\mathbb R}}
\def\N{{\mathcal N}}
\def\D{{\mathcal D}}
\def\dx{{\rm d}x}
\def\dxy{{\rm d}x {\rm d}y}
\def\C{{\mathcal{C}}}
\def\Xspace{\mathcal{X}_{\Sigma_{\mathcal{D}}}^s(\C_\Omega)}
\def\Hspace{{H}_{\Sigma_{\mathcal{D}}}^s(\Omega)}
\def\del{{\partial}}
\thanks{$^{\dagger}$Corresponding author.}
\begin{document}

\begin{abstract}
In this paper, we analyze the existence of solution for a fractional elliptic system coupled by critical nonlinearities and endowed with mixed Dirichlet-Neumann boundary conditions. By means of variational methods and an orthogonalization-like process in the corresponding Sobolev space, we establish the existence of at least one weak solution.
\end{abstract}

\maketitle 
\section{Introduction}\label{S1}
In this article, we establish the existence of a non-trivial weak solution to the following system of equations coupled with critical nonlinearity and endowed with mixed boundary conditions:
\begin{equation} \label{main problem}
    \left\{
    \begin{aligned}
         (-\Delta)^{s} u&=au+bv + \alpha u|u|^{\alpha-2}|v|^\beta  & \mbox{if} ~x \in \Omega , \\[3pt]
            (-\Delta)^{s} v&=bu+cv+  \beta v|u|^{\alpha}|v|^{\beta-2} & \mbox{if} ~x \in \Omega , \\[3pt]
            u&=v=0   & \mbox{if} ~x \in \Sigma_{\mathcal{D}},\\
            \frac{\partial u}{\partial \nu}&=\frac{\partial v}{\partial \nu}=0 & \mbox{if} ~x \in \Sigma_{\mathcal{N}},
    \end{aligned}
    \right.
\end{equation}
where $\Omega \subset \R^N$ is a bounded domain with a smooth boundary, $N>2s$ and $s \in (\frac{1}{2},1)$. The operator $(-\Delta)^s$ denotes the spectral fractional Laplace operator on $\Omega$. Further, we have the following assumptions:
\begin{enumerate}
    \item $\Omega \subset \R^N$ is a bounded smooth domain;
    \item $\Sigma_{\D}$ and $\Sigma_{\N}$ are smooth $(N-1)$-dimensional submanifolds of $\del \Omega$ with $ \Sigma_{\D} \cup \Sigma_{\N}=\partial \Omega$;
    \item $\Sigma_{\D}$ is a closed manifold with positive $(N-1)$-dimensional Lebesgue measure, say $\abs{\Sigma_{\D}}=\eta \in (0, \abs{\del \Omega})$;
    \item $\Sigma_{\D} \cap \Sigma_{\N} = \emptyset$ and $\Sigma_{\D} \cap \overline{\Sigma}_{\N}=\Gamma$, with $\Gamma$ a smooth $(N-2)$-dimensional submanifold of $\del \Omega$;
    \item $a,b,c$ are real constants and $\alpha,\beta>1$ are such that $\alpha+\beta=2_s^*, 2_s^*=\frac{2N}{N-2s}, N>2s$. 
\end{enumerate}
Elliptic systems of Schrödinger-type equations or related ones have attracted a lot of attention in the last decades either for their numerous applications, for instance, in Bose-Einstein condensates (cf. \cite{Esry,Frantzeskakis, Ruegg}), in models from Nonlinear Optics (cf. \cite{Akhmediev2,Manakov}), in birefringence models (cf. \cite{Kaminow,Menyuk}), in Kerr-like photo-refractive phenomena (cf. \cite{Akhmediev, Kivshar}, in the Schrödinger-Korteweg-de Vries model for gravity-capillary waves (cf. \cite{Kakutani}) or for some plasma physics models (cf. \cite{Hojo}) or for their purely mathematical interest as they represent the vectorial case of some important scalar equations. In this regard, the system \eqref{main problem} can be seen as the vectorial mixed-boundary-data analogue of the famous Brezis-Nirenberg problem (cf. \cite{Brezis-1983}).\newline
In contrast with elliptic systems endowed either with Dirichlet or with Neumann boundary conditions, where there is a broad literature dealing with existence, multiplicity and qualitative properties of the solutions (cf. \cite{Bouchekif-2008,Chen-2012,Hsu-2011,Kang-2015,Tavares-2020,Tavares-2022} and references therein); elliptic systems with mixed boundary conditions have been much less studied and there are few works dealing with such systems (cf. \cite{Alves-2008,Morais-2012}). Thus, our main aim is to prove the existence of a non-trivial weak solution to the system \eqref{main problem} and thus extending to the fractional setting with mixed boundary conditions the results contained in \cite{alves-2000} where the authors consider \eqref{main problem} for $s=1$ and $\Sigma_{\mathcal{N}}=\emptyset$.

We will prove the existence of at least one weak solution to the system \eqref{main problem} when the eigenvalues $\mu_1$ and $\mu_2$ of the linear perturbation encoded by the matrix $A$ (see \eqref{Quadratic-form} below) lie between two consecutive eigenvalues $\Lambda_{k}<\Lambda_{k+1}$ of the spectral fractional Laplacian, namely, when $\Lambda_k\leq \mu_1\leq\mu_2<\Lambda_{k+1}$. Therefore, we will obtain the existence of solution for the so-called \textit{one sided resonant problem}. The resonant problem for the scalar equation was addressed in the classical setting for $N\geq 5$ by Capozzi, Fortunato and Palmieri (cf. \cite{Capozzi-1985} see also \cite{Zhang-1989}) and in the non-local case (for a fractional Laplacian defined through the Riesz potential) by Servadei (cf. \cite{Servadei-2014} see also \cite{Servadei-2013,Servadei-2015}).

When dealing with the system \eqref{main problem} one faces some difficulties due to the non-local nature of the operator $(-\Delta)^s$, such as, but not restricted to, obtaining explicit estimates for the action of the operator $(-\Delta)^s$ on particular functions. Precisely, in order to prove Lemma \ref{lemma3.2} below, which in turn extends \cite[Lemma 2]{Gazzola-1997} (see also \cite[Lemma 3.1]{Chabrowski-2007} and \cite[Lemma 2.1]{Morais-2012}) to the non-local mixed boundary data setting; we are forced to use the extension technique introduced by Caffarelli and Silvestre (cf. \cite{Silvestre-2007}) to reformulate the nonlocal system \eqref{main problem} in terms of a (singular) local elliptic system in the half space $\mathbb{R}^{N+1}_+$. The extension technique is also needed when one tries to construct paths for the Euler-Lagrange functional associated to \eqref{main problem} whose energy is below certain critical threshold below which the compactness holds. In this sense, the critical nature of the coupling term makes the study of system \eqref{main problem} delicate, as it involves a precise analysis of Palais-Smale sequences and a compactness-like condition that we carry out by means of the orthogonal decomposition introduced by \cite{Gazzola-1997}. However, the use of the extension technique is not fully useful at some steps of the proof and we need to work directly with the expansion in terms of the eigenfuctions to control some remainder terms arising from this orthogonal decomposition.

\textbf{Organization of the paper:} In Section \ref{Functional_Framework} we introduce the functional framework to deal with the non-local system \eqref{main problem}, paying special attention to the Sobolev-type constant associated to system \eqref{main problem}. In Section \ref{Preliminary_Results} we prove some preliminary results needed in the proof of the main result of the work. In particular, we use orthogonalization like process to obtain precise estimates that we will use to prove the main existence result of the work, which is stated and proved in Section \ref{Main_Result}. Section \ref{Geom_Compact} contains the analysis of the geometry of the Euler-Lagrange functional associated to \eqref{main problem} and the analysis of the compactness for the corresponding minimizing sequences.

\section{Functional Framework}\label{Functional_Framework}
We start by recalling the definition of the spectral fractional Laplacian $(-\Delta)^s$ which is based, as its very name suggests, on the spectral decomposition of the classical Laplacian endowed with mixed boundary data. 

Let $(\lambda_j,\varphi_j)$ be the eigenvalues and eigenfunctions (normalized in $L^2(\Omega)$ norm), respectively, of the classical Laplace operator $(-\Delta)$ with homogeneous mixed Dirichlet-Neumann boundary conditions. Given a function
$$u(x)=\sum_{j\geq1}\langle u,\varphi_j\rangle_2\varphi_j(x),\quad x\in \Omega,$$
where $\langle u,v\rangle_2 =\int_{\Omega}uv\,\dx $ denotes the standard scalar product on $L^2(\Omega)$, the operator $(-\Delta)$ acts on $u$ through its action on each eigenfunction, namely,
\begin{equation*}
(-\Delta)u(x)=\sum_{j\geq 1}\lambda_j\langle u,\varphi_j\rangle_2\varphi_j(x),\quad x\in\Omega.
\end{equation*}
Following this, we define the fractional operator $(-\Delta)^s$, $0<s<1$, as the operator whose action is given by
\begin{equation}\label{spectral}
(-\Delta)^su(x)=\sum_{j\geq1}\lambda_j^s\langle u,\varphi_j\rangle_2\varphi_j(x),\quad x\in\Omega.
\end{equation}
Consequently, given two smooth functions
\begin{align*}
    u_i(x)= \sum_{j\geq1}\langle u_i, \varphi_j \rangle_2\varphi_j, \quad i=1,2,
\end{align*}
by definition we have
\begin{equation}\label{act2}
    \langle (-\Delta)^su_1,u_2\rangle_2 =\sum_{j\geq1}\lambda_j^s \langle u_1, \varphi_j \rangle_2 \langle u_2, \varphi_j \rangle_2.
\end{equation}
Equivalently, the pairs $(\Lambda_j,\varphi_j)=(\lambda_j^s,\varphi_j)$ are the eigenvalues and the eigenfunctions of $(-\Delta)^s$ respectively. 
As a consequence the spectral fractional Laplace operator $(-\Delta)^s$ is well-defined in the following Hilbert space
\begin{equation*}
    H_{\Sigma_{\mathcal{D}}}^s(\Omega):=\left\{u = \sum_{j\geq1}a_j \varphi_j \in L^2(\Omega) : ~u=0~ \text{ on } \Sigma_\mathcal{D}, \norm{u}_{H_{\Sigma_{\mathcal{D}}}^s(\Omega)}^2:= \sum_{j\geq1}a_j^2 \lambda_j^s <+\infty \right\}.
\end{equation*}
\begin{remark}
Because of \cite[Theorem 11.1]{Lions-Magenes}, if $0<s\leq \frac{1}{2}$, then we have $H_0^s(\Omega)=H^s(\Omega)$ and thus also $H_{\Sigma_{\mathcal{D}}}^s(\Omega)=H^s(\Omega)$. On the other hand, if $s\in (1/2,1)$ then we have $H_0^s(\Omega)\subsetneq H^s(\Omega)$. Therefore, the range $s\in (1/2,1)$ ensures that $H_{\Sigma_{\mathcal{D}}}^s(\Omega)\subsetneq H^s(\Omega)$.
\end{remark}
For a given $u\in H_{\Sigma_{\mathcal{D}}}^s(\Omega)$, it follows also by definition that
\begin{equation*}
    \norm{u}_{H_{\Sigma_{\mathcal{D}}}^s(\Omega)} = \norm{(-\Delta)^{\frac{s}{2}}u}_{L^2(\Omega)}.
\end{equation*}
Moreover, taking \eqref{act2} into mind, the norm $\|\cdot\|_{H_{\Sigma_{\mathcal{D}}}^s(\Omega)}$ is induced by the scalar product
\begin{equation}\label{int-b-p}
    \langle u_1,u_2\rangle_{H_{\Sigma_{\mathcal{D}}}^s(\Omega)} = \langle (-\Delta)^s u_1,u_2\rangle_{2}= \langle (-\Delta)^\frac{s}{2} u_1,(-\Delta)^\frac{s}{2}u_2\rangle_{2} = \langle  u_1,(-\Delta)^su_2\rangle_{2},
\end{equation}
for $u_1,u_2 \in H_{\Sigma_{\mathcal{D}}}^s(\Omega)$. The chain of equalities \eqref{int-b-p} can be simply stated as an
integration-by-parts like formula.

Next, we define the product space $\mathbb{H}:= H_{\Sigma_{\mathcal{D}}}^s(\Omega) \times H_{\Sigma_{\mathcal{D}}}^s(\Omega)$ with respect to the norm
\begin{align*}
    \norm{(u,v)}_\mathbb{H}^2 = \norm{u}_{H_{\Sigma_{\mathcal{D}}}^s(\Omega)}^2 + \norm{v}_{H_{\Sigma_{\mathcal{D}}}^s(\Omega)}^2 = \norm{(-\Delta)^{\frac{s}{2}}u}_{L^2(\Omega)}^2 + \norm{(-\Delta)^{\frac{s}{2}}v}_{L^2(\Omega)}^2.
\end{align*}
As \eqref{int-b-p} shows, the definition of fractional powers of the Laplace operator given by \eqref{spectral} allows us to integrate
by parts in the proper spaces, so that a natural definition of weak solution to the system \eqref{main problem} is the following.

\begin{definition}
A pair $(u,v) \in \mathbb{H}$ is said to be a weak solution to \eqref{main problem} if, for all $(\phi, \psi)\in \mathbb{H}$, we have
\begin{equation}\label{Euler Lagrange Identity}
    \begin{split}
    \int_{\Omega} (-\Delta)^{\frac{s}{2}}u (-\Delta)^{\frac{s}{2}}\phi\,\dx + \int_{\Omega} (-\Delta)^{\frac{s}{2}}v (-\Delta)^{\frac{s}{2}}\psi\,\dx=&
     \int_{\Omega} (au+bv)\phi\,\dx + \int_{\Omega} (bu+cv)\psi\,\dx\\
    & + \alpha \int_{\Omega}u|u|^{\alpha-2}|v|^{\beta}\phi\,\dx + \beta \int_{\Omega}v|u|^{\alpha}|v|^{\beta-2}\psi\,\dx.
    \end{split}
\end{equation}
\end{definition}

The energy functional $J:\mathbb{H} \rightarrow \R$ associated with \eqref{Euler Lagrange Identity} is given by
\begin{equation}\label{Energy functional}
\begin{split}
    J(u,v)=& \frac{1}{2} \int_{\Omega} |(-\Delta)^{\frac{s}{2}}u|^2\,\dx + \frac{1}{2} \int_{\Omega} |(-\Delta)^{\frac{s}{2}}v|^2\,\dx - \frac{1}{2} \int_{\Omega} (au^2+2buv+ cv^2)\,\dx -\int_{\Omega}|u|^\alpha |v|^\beta \,\dx .
\end{split}
\end{equation}

\subsection{The Extension Technique}\hfill 

Next we use the extension technique introduced by Caffarelli and Silvestre (cf. \cite{Silvestre-2007}) and also developed in \cite{Colorado-2013,Cabre-2010,Capella-2011} for problems in bounded domains, to rewrite the nonlocal elliptic system \eqref{main problem} in terms of a local (and singular) elliptic system in $\mathbb{R}^{N+1}_+$.\newline 
Associated with the domain $\Omega$, we consider the cylinder $\C_{\Omega}:= \Omega \times (0,\infty) \subset \R_+^{N+1}$. We denote with $(x,y)$ points that belong to $\C_{\Omega}$ and with $\partial_L \Omega = \partial \Omega \times (0,\infty)$ the lateral boundary of the extension cylinder. Given a function $u \in H_{\Sigma_{\mathcal{D}}}^s(\Omega)$, we define its $s$-extension $U=E_s[u]$ to the cylinder $\C_\Omega$ as the solution to the problem
\begin{align}\label{Extension problem}
    \begin{cases}
        -\text{div}(y^{1-2s} \nabla U) = 0 &\qquad \text{in } \C_\Omega\\
       \mkern+73mu B^*(U)=0  &\qquad \text{on } \partial_L\C_\Omega\\
       \mkern+68mu U(x,0)=u(x) &\qquad \text{in } \Omega \times \{y=0\},
    \end{cases}
\end{align}
where $B^*(U)= U \chi_{\Sigma^*_{\D}} + \frac{\partial U}{\partial \nu} \chi_{\Sigma^*_{\N}}$, with $\Sigma^*_{\D}= \Sigma_{\D} \times (0,\infty)$ and $\Sigma^*_{\N}= \Sigma_{\N} \times (0,\infty)$. The extension function $U$ belongs to the space
\begin{align}\label{extension space}
   \mathcal{X}_{\Sigma_{\mathcal{D}}}^s(\C_\Omega):= \overline{C_0^\infty((\Omega \cup \Sigma_\N) \times [0,\infty))}^{\norm{\cdot}_{\mathcal{X}_{\Sigma_{\mathcal{D}}}^s(\C_\Omega)}}
\end{align}
equipped with the norm
\begin{align}\label{Extension norm}
    \norm{U}_{\mathcal{X}_{\Sigma_{\mathcal{D}}}^s(\C_\Omega)}^2 = k_s \int_{\C_\Omega} y^{1-2s} |\nabla U(x,y)|^2\,\dxy,
\end{align}
with $k_s=2^{2s-1}\frac{\Gamma(s)}{\Gamma(1-s)}$. The extension operator between $\Hspace$ and $\Xspace$ is an isometry (cf. \cite{Colorado-2013}), i.e., 
\begin{align}\label{isomtery of norms}
    \norm{E_s[\varphi]}_{\Xspace} = \norm{\varphi}_{\Hspace}\qquad \text{for all } \varphi \in \Hspace.
\end{align}
The key point of the $s$-extension is that it is related to the fractional Laplacian of the original function through the formula
\begin{align}\label{relation extension function and fractional Laplace}
    \frac{\partial U}{\partial \nu^s}:= -k_s \lim\limits_{y \rightarrow 0^+} y^{1-2s}U_y(x,y) = (-\Delta)^su(x).
\end{align}
By the above arguments, we can reformulate our problem \eqref{main problem} in terms of the extension problem as follows
\begin{align}\label{main problem extension}
    \begin{cases}
        -\text{div}(y^{1-2s} \nabla U) = 0 & \quad \text{in } \C_\Omega,\\
        -\text{div}(y^{1-2s} \nabla V) = 0 & \quad \text{in } \C_\Omega,\\
        \mkern+73mu B^*(U)=0 & \quad \text{on } \partial_L\C_\Omega,\\
        \mkern+72.5mu B^*(V)=0 & \quad \text{on } \partial_L\C_\Omega,\\[3pt]
        \mkern+90mu\dfrac{\partial U}{\partial \nu^s}= aU+bV + \alpha U|U|^{\alpha-2}|V|^\beta  & \quad \text{in } \Omega \times \{y=0\},\\[7pt]
        \mkern+90mu \dfrac{\partial V}{\partial \nu^s}= bU+cV + \beta |U|^\alpha V |V|^{\beta-2} & \quad \text{in } \Omega \times \{y=0\}.
    \end{cases}
\end{align}
An energy solution to problem \eqref{main problem extension} is a function $(U,V) \in \mathbb{X}:= \Xspace \times \Xspace$ such that
\begin{align}\label{Euler Lagrange Extension problem}
\begin{split}
      &k_s \int_{\C_\Omega} y^{1-2s} \langle \nabla U, \nabla \varphi \rangle\,\dxy + k_s \int_{\C_\Omega} y^{1-2s} \langle \nabla V, \nabla \psi \rangle\,\dxy \\
    &= \int_{\Omega}\left( aU(x,0)+bV(x,0)+ \alpha U(x,0)|U(x,0)|^{\alpha-2}|V(x,0)|^\beta \right)\varphi(x,0) \,\dx  \\
    &\quad + \int_{\Omega}\left(bU(x,0)+cV(x,0)+ \beta|U(x,0)|^\alpha V(x,0) |V(x,0)|^{\beta-2}\right) \psi(x,0)\,\dx,
\end{split}
\end{align}
for all $(\varphi,\psi) \in \mathbb{X}$. Given $(U,V)\in\mathbb{X}$ a solution to \eqref{main problem extension}, the pair $(u,v) = (Tr(U), Tr(V))= (U(x,0), V(x,0))$ belongs to the space $\mathbb{H}$ and it is an energy solution to \eqref{main problem} and vice versa, if $(u,v) \in \mathbb{H}$ is a solution to \eqref{main problem}, then $(U,V)= (E_s[u], E_s[v]) \in \mathbb{X}$ is a solution to \eqref{main problem extension} and as a consequence, both formulations are equivalent. Finally, the energy functional associated to \eqref{main problem extension} is given by 
\begin{align}\label{energy functional to extension problem}
\begin{split}
   \widetilde{J}(U,V)=& \frac{k_s}{2} \int_{\C_\Omega} y^{1-2s} |\nabla U|^2\,\dxy + \frac{k_s}{2} \int_{\C_\Omega} y^{1-2s} |\nabla V|^2\,\dxy\\
   & - \frac{1}{2}\int_{\Omega}\left(aU^2(x,0)+2b U(x,0)V(x,0)+cV^2(x,0)\right)\,\dx -\int_{\Omega} |U(x,0)|^\alpha |V(x,0)|^\beta \,\dx.
\end{split}
\end{align}
Next, let us consider the fractional Sobolev constant, namely,
\begin{align}\label{Sobolev constant}
    S_{\alpha +\beta}(\Omega) = \inf\limits_{u\in \Hspace \setminus \{0\}}\frac{\norm{u}_{\Hspace}^2}{\norm{u}_{L^{\alpha+\beta}(\Omega)}^2}= \inf\limits_{U\in \Xspace \setminus \{0\}} \frac{\norm{U}_{\Xspace}^2}{\norm{U(\cdot,0)}_{L^{\alpha +\beta}(\Omega)}^2}.
\end{align}
Let us notice that the second equality of the above definition follows since $s$-extension minimizes the $\norm{\cdot}_{\Xspace}$ along all the functions with the same trace on $\{y=0\}$ (cf. \cite[Lemma 2.4]{Ortega-2019}), that is,
\begin{equation}\label{A-eq7}
    \|E_s[w(x,0)]\|^2_{\Xspace} \leq \|w\|^2_{\Xspace}, \text{ for all } w\in \Xspace.
\end{equation}

Since we consider boundary configurations such that $0<\abs{\Sigma_{\mathcal{D}}}<\abs{\partial\Omega}$, the constant $S_{\alpha +\beta}(\Omega)$ is well defined, and $S_{\alpha +\beta}(\Omega)>0$. Moreover, as $\alpha+\beta=2_s^*$, by \cite[Proposition 3.6]{Ortega-2019}, we have 
$$S_{\alpha +\beta}(\Omega)\leq 2^{\frac{-2s}{N}}S(N,s).$$
In addition, we also have the following result about the attainability of the constant $S_{\alpha +\beta}(\Omega)$.
\begin{theorem}\cite[Theorem 2.9]{Ortega-2019}\label{th_att}
If $S_{\alpha +\beta}(\Omega)<2^{\frac{-2s}{N}}S(N,s)$, then $S_{\alpha +\beta}(\Omega)$ is attained.
\end{theorem}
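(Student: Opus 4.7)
The plan is to establish attainability by a concentration-compactness argument performed on the extension side, where the ambient space $\Xspace$ provides a clean Hilbert framework. I start by taking a minimizing sequence $\{U_n\}\subset\Xspace$ for the right-hand side of \eqref{Sobolev constant}, normalized so that $\|U_n(\cdot,0)\|_{L^{2_s^*}(\Omega)}=1$ and $\|U_n\|_{\Xspace}^2\to S_{\alpha+\beta}(\Omega)$. Boundedness in $\Xspace$, together with the trace inequality and the compactness of the embedding into $L^p(\Omega)$ for $p<2_s^*$, yields, along a subsequence, $U_n\wra U$ in $\Xspace$ and $U_n(\cdot,0)\to U(\cdot,0)$ both a.e.\ and strongly in every subcritical $L^p(\Omega)$.

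The core step is a version of P.~L.~Lions' concentration-compactness principle adapted to the weighted half-cylinder $\C_\Omega$ with mixed boundary conditions. Up to subsequence, the bounded measures $k_s y^{1-2s}|\nabla U_n|^2\,\dxy$ and $|U_n(\cdot,0)|^{2_s^*}\,\dx$ converge weakly-$*$ to measures $\mu$ and $\nu$ satisfying
\begin{equation*}
\mu\geq k_s y^{1-2s}|\nabla U|^2+\sum_{j\in J}\mu_j\,\delta_{(x_j,0)},\qquad \nu=|U(\cdot,0)|^{2_s^*}+\sum_{j\in J}\nu_j\,\delta_{x_j},
\end{equation*}
where the concentration points satisfy $x_j\in\overline{\Omega}\setminus\Sigma_{\D}$ (the homogeneous Dirichlet trace on $\Sigma_{\D}$ rules out concentration there) and the local Sobolev inequalities
\begin{equation*}
\mu_j\geq S(N,s)\,\nu_j^{2/2_s^*}\ \text{ if }\ x_j\in\Omega,\qquad \mu_j\geq 2^{-2s/N}S(N,s)\,\nu_j^{2/2_s^*}\ \text{ if }\ x_j\in\Sigma_{\N},
\end{equation*}
the boundary bound being obtained by performing an even reflection of (a cut-off of) $U_n$ across $\Sigma_{\N}^*$ and invoking the sharp fractional Sobolev inequality on $\R^N$.

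Combining the decomposition with the normalizations $\nu(\overline\Omega)=1$ and $\mu(\overline{\C_\Omega})=\lim\|U_n\|_{\Xspace}^2=S_{\alpha+\beta}(\Omega)$, together with $\|U\|_{\Xspace}^2\geq S_{\alpha+\beta}(\Omega)\,\|U(\cdot,0)\|_{L^{2_s^*}(\Omega)}^2$ coming from the very definition of $S_{\alpha+\beta}(\Omega)$, I would obtain, setting $a=\|U(\cdot,0)\|_{L^{2_s^*}(\Omega)}^{2_s^*}$ and $q=2/2_s^*\in(0,1)$,
\begin{equation*}
S_{\alpha+\beta}(\Omega)\geq S_{\alpha+\beta}(\Omega)\,a^{q}+2^{-2s/N}S(N,s)\sum_{j\in J}\nu_j^{q}.
\end{equation*}
A short manipulation based on the superadditivity $\sum_j\nu_j^{q}\geq (\sum_j\nu_j)^{q}=(1-a)^{q}\geq 1-a^{q}$, valid for $q\in(0,1)$ and $a\in[0,1]$, then forces $S_{\alpha+\beta}(\Omega)\geq 2^{-2s/N}S(N,s)$ whenever any $\nu_j>0$, contradicting the hypothesis. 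Hence $\sum_j\nu_j=0$, so $U_n(\cdot,0)\to U(\cdot,0)$ strongly in $L^{2_s^*}(\Omega)$, $\|U(\cdot,0)\|_{L^{2_s^*}(\Omega)}=1$ (in particular vanishing $U\equiv 0$ is ruled out), and weak lower semicontinuity of $\|\cdot\|_{\Xspace}$ makes $U$ an extremal for $S_{\alpha+\beta}(\Omega)$.

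The main obstacle is the derivation of the local Sobolev inequality at Neumann concentration points with the sharp constant $2^{-2s/N}S(N,s)$: one must justify that an even reflection of $U_n$ across the lateral Neumann piece $\Sigma_{\N}^*$ of $\partial_L\C_\Omega$ produces an admissible competitor for the sharp $\R^N$ fractional Sobolev inequality, with doubled trace volume. The smoothness of $\partial\Omega$ near $\Sigma_{\N}$, the fact that $\Sigma_{\D}\cap\overline{\Sigma}_{\N}=\Gamma$ is a smooth $(N-2)$-dimensional interface, and a localization argument that flattens $\Sigma_{\N}$ are precisely what make this reflection genuinely quantitative and compatible with the degenerate elliptic weight $y^{1-2s}$.
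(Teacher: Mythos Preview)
The paper does not supply its own proof of this statement; it is quoted as \cite[Theorem~2.9]{Ortega-2019} and used as a black box throughout. There is therefore nothing in the present paper to compare your argument against.

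That said, your outline is the standard route to such an attainability result and is essentially the one carried out in the cited reference: a normalized minimizing sequence on the extension side, the Lions concentration--compactness decomposition for the weighted gradient measures $k_s y^{1-2s}|\nabla U_n|^2\,\dxy$ and the trace measures $|U_n(\cdot,0)|^{2_s^*}\,\dx$, and the dichotomy between interior atoms (local constant $S(N,s)$) and atoms on $\overline{\Sigma_{\mathcal N}}$ (local constant $2^{-2s/N}S(N,s)$ via boundary flattening and even reflection across $\Sigma_{\mathcal N}^*$). Your subadditivity computation with $q=2/2_s^*\in(0,1)$ is correct and forces $S_{\alpha+\beta}(\Omega)\geq 2^{-2s/N}S(N,s)$ whenever some $\nu_j>0$, which is the desired contradiction. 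You have also correctly flagged the only genuinely technical step: making the reflection across the curved Neumann piece quantitative for the degenerate weight $y^{1-2s}$, which requires local flattening of $\partial\Omega$ near points of $\Sigma_{\mathcal N}$ and a change of variables with Jacobian close to $1$.
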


This result highlights one of the major differences between Dirichlet problems and mixed Dirichlet-Neumann problems. Observe that, using H\"{o}lder's inequality and the spectral definition of the fractional operator $(-\Delta)^s$, we get $S_{\alpha +\beta}(\Omega)\leq|\Omega|^{\frac{2s}{N}}\lambda_1^s(\eta)$, with $\lambda_1(\eta)$ the first eigenvalue of the classical Laplace operator endowed with mixed boundary conditions in the sets $\Sigma_{\mathcal{D}}=\Sigma_{\mathcal{D}}(\eta)$ and $\Sigma_{\mathcal{N}}= \Sigma_{\mathcal{N}}(\eta)$ being $\abs{\Sigma_{\mathcal{D}}}=\eta$. Since $\lambda_1(\eta)\to0$ as $\eta\to0^+$, (cf. \cite[Lemma 4.3]{Colorado-2003}), we have $S_{\alpha +\beta}(\Omega)\to0$ as $\eta\to0^+$.\newline 
Therefore, in contrast to the Dirichlet case ($\Sigma_{\mathcal{D}}=\partial\Omega$) where the corresponding Sobolev constant is not attained under some geometrical assumptions on $\Omega$, for instance, under star-shapeness assumptions (cf. \cite{Pohozaev-1965}); in the mixed case, the corresponding Sobolev constant
$S_{\alpha +\beta}(\Omega)$ can be achieved thanks to Theorem \ref{th_att} by taking the size of the Dirichlet boundary part small enough (cf. \cite[Theorem 1.1-(3)]{Ortega-2019}). Non-existence results based on a Poho\v{z}aev--type identity and star-shapeness like assumptions also holds for mixed problems (cf. \cite{Lions-1988,Ortega-2019}). 

As it is customary when dealing with critical problems, due to the lack of compactness of the Sobolev embedding $H_0^{s}(\Omega)  \hookrightarrow L^{p}(\Omega)$ at the critical exponent $p=2_s^*$, one first finds a threshold below of which the compactness of minimizing sequences for \eqref{main problem} holds and next continues by constructing paths whose energy is below such critical threshold and, hence, satisfying the so-called Palais-Smale condition. If the constant $S_{\alpha+\beta}(\Omega)$ is attained, it is realized by some unknown functions so we can not obtain explicit estimates that allow us to produce paths below the threshold ensuring the needed compactness. If, on the contrary, the constant $S_{\alpha+\beta}(\Omega)$ is not attained then, by Theorem \ref{th_att}, we have $S_{\alpha+\beta}(\Omega)=2^{-\frac{2s}{N}}S(N,s)$ and therefore the use of the Aubin-Talenti instantons (see \eqref{aubin} below) allows us to produce paths for the energy functional below the critical threshold (see \eqref{crit_level}) to the system \eqref{main problem}. 
Therefore, in what follows we assume that 
\begin{equation}\label{non_att_hyp}\tag{$\mathcal{H}$}
S_{\alpha+\beta}(\Omega)\ \text{is not attained, that is,}\  S_{\alpha+\beta}(\Omega)=2^{-\frac{2s}{N}}S(N,s).
\end{equation}

Further, we also define the Sobolev--type constant related to the system \eqref{main problem} as
\begin{align}\label{Sobolev type constant on product space}
     \widetilde{S}_{(\alpha,\beta)}(\Omega) =  \inf\limits_{(U,V)\in\mathbb{X} \setminus \{(0,0)\}} \frac{\norm{U}_{\Xspace}^2 + \norm{V}_{\Xspace}^2}{\displaystyle\left(\int_{\Omega}|U(x,0)|^\alpha |V(x,0)|^\beta\,\dx\right)^{\frac{2}{\alpha +\beta}}}.
\end{align}
The next result, whose proof follows as that of \cite[Theorem 5]{alves-2000}, establishes a fundamental relation between the Sobolev constants \eqref{Sobolev constant} and \eqref{Sobolev type constant on product space}. Precisely, it allows us to relate the functions where the Sobolev constant \eqref{Sobolev constant} is attained with the pair where \eqref{Sobolev type constant on product space} is attained so we can relate the energy of the paths constructed by using the Aubin-Talenti instantons with the critical threshold below of which the compactness holds.
\begin{theorem}\label{important relation}
    Let $\Omega$ be a domain (need not be bounded) and $\alpha + \beta \leq 2_s^*$. Then we have
    \begin{align}\label{equivalence in the Sobolev constant}
    \widetilde{S}_{(\alpha,\beta)}(\Omega) = \left[ \left(\frac{\alpha}{\beta} \right)^{\frac{ \beta}{\alpha +\beta}} + \left(\frac{\alpha}{\beta} \right)^{-\frac{ \alpha}{\alpha +\beta}} \right] S_{\alpha +\beta}(\Omega).
\end{align}
Moreover, if $S_{\alpha +\beta}(\Omega)$ is attained at $u$, then $\widetilde{S}_{(\alpha,\beta)}(\Omega)$ is attained at the pair $(\mathcal{A}u,\mathcal{B}u)$ for any real constants $\mathcal{A}$ and $\mathcal{B}$ such that $\frac{\mathcal{A}}{\mathcal{B}}=\sqrt{\frac{\alpha}{\beta}}$.
\end{theorem}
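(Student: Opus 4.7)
The plan is to establish the identity via matching upper and lower bounds, both of which reduce to the same scalar optimization problem.

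For the upper bound, I would test the variational quotient \eqref{Sobolev type constant on product space} on pairs of the form $(\mathcal{A}U,\mathcal{B}U)$ with $\mathcal{A},\mathcal{B}>0$ and $U=E_s[u]$, $u \in \Hspace$, so that by the isometry \eqref{isomtery of norms} and the trace identity the ratio factorizes as
\begin{equation*}
\frac{\|\mathcal{A}U\|_{\Xspace}^2 + \|\mathcal{B}U\|_{\Xspace}^2}{\left(\int_\Omega |\mathcal{A}u|^\alpha |\mathcal{B}u|^\beta \,\dx\right)^{2/(\alpha+\beta)}} \;=\; \frac{\mathcal{A}^2+\mathcal{B}^2}{(\mathcal{A}^\alpha \mathcal{B}^\beta)^{2/(\alpha+\beta)}}\cdot \frac{\|U\|_{\Xspace}^2}{\|u\|_{L^{\alpha+\beta}(\Omega)}^2}.
\end{equation*}
Taking the infimum first in $u$ yields $S_{\alpha+\beta}(\Omega)$, and then I would minimize the prefactor $f(\mathcal{A},\mathcal{B}):=(\mathcal{A}^2+\mathcal{B}^2)/(\mathcal{A}^\alpha \mathcal{B}^\beta)^{2/(\alpha+\beta)}$. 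Since $f$ is homogeneous of degree zero, this reduces to the one-variable problem $t\mapsto (t^2+1)/t^{2\alpha/(\alpha+\beta)}$ with $t=\mathcal{A}/\mathcal{B}$; a derivative computation shows that the unique critical point is $t^2=\alpha/\beta$, yielding the bracket constant in \eqref{equivalence in the Sobolev constant}.

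For the lower bound I would argue for an arbitrary admissible pair $(U,V)\in\mathbb{X}\setminus\{(0,0)\}$. Set $a:=\|U(\cdot,0)\|_{L^{\alpha+\beta}(\Omega)}$, $b:=\|V(\cdot,0)\|_{L^{\alpha+\beta}(\Omega)}$; by H\"older's inequality (with exponents $(\alpha+\beta)/\alpha$ and $(\alpha+\beta)/\beta$) we get $\int_\Omega |U(\cdot,0)|^\alpha |V(\cdot,0)|^\beta\,\dx \leq a^\alpha b^\beta$, while the Sobolev inequality \eqref{Sobolev constant} gives $\|U\|_{\Xspace}^2 \geq S_{\alpha+\beta}(\Omega)\, a^2$ and $\|V\|_{\Xspace}^2 \geq S_{\alpha+\beta}(\Omega)\, b^2$. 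Combining, the quotient in \eqref{Sobolev type constant on product space} is bounded below by $S_{\alpha+\beta}(\Omega) f(a,b)$, and the same scalar minimization computed above provides the matching lower bound.

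Finally, for the attainment claim, if $u$ realizes $S_{\alpha+\beta}(\Omega)$ and $\mathcal{A}/\mathcal{B}=\sqrt{\alpha/\beta}$, then in the lower-bound argument every inequality becomes an equality: H\"older is sharp because $U(\cdot,0)$ and $V(\cdot,0)$ are proportional, the Sobolev inequality is saturated by hypothesis, and $f(\mathcal{A},\mathcal{B})$ sits at its minimizer; hence $(\mathcal{A}u,\mathcal{B}u)$ (identified with $(E_s[\mathcal{A}u],E_s[\mathcal{B}u])$ on the extension side) attains $\widetilde{S}_{(\alpha,\beta)}(\Omega)$.

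I expect the calculus optimization of $f$ to be the main technical step, but it is elementary; the conceptual care is to ensure consistency between the trace formulation and the extension formulation of both $S_{\alpha+\beta}(\Omega)$ and $\widetilde{S}_{(\alpha,\beta)}(\Omega)$, which is justified by \eqref{isomtery of norms} and \eqref{A-eq7}, so that the proof transfers verbatim from the classical setting treated in \cite{alves-2000}.
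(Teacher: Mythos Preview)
Your argument is correct and the upper bound coincides with the paper's. The lower bound, however, is organized differently: the paper takes a minimizing sequence $(U_n,V_n)$ for $\widetilde{S}_{(\alpha,\beta)}(\Omega)$, rescales $V_n$ to $\Phi_n=t_nV_n$ so that $\|U_n(\cdot,0)\|_{L^{\alpha+\beta}}=\|\Phi_n(\cdot,0)\|_{L^{\alpha+\beta}}$, and then applies Young's inequality to bound $\int|U_n|^\alpha|\Phi_n|^\beta$ from above by a convex combination of the pure $L^{\alpha+\beta}$ norms; this produces the factor $g(t_n)$ and the same scalar minimization. Your route via H\"older with exponents $(\alpha+\beta)/\alpha$ and $(\alpha+\beta)/\beta$ is more direct---it avoids the auxiliary rescaling and works pointwise for every admissible pair rather than along a minimizing sequence---while the paper's Young-inequality trick is the device used in the original reference \cite{alves-2000}. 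Both lead to the identical one-variable optimization, and your attainment argument (equality in H\"older for proportional functions, saturation of the Sobolev inequality, and the optimal ratio $\mathcal{A}/\mathcal{B}=\sqrt{\alpha/\beta}$) is cleaner than inferring attainment from the sequential argument.
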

\begin{proof}
Let $\{\Psi_n\} \subset \Xspace$ be a minimizing sequence for \eqref{Sobolev constant}. Let $\gamma,\delta >0$ to be chosen later. Taking $U_n = \gamma \Psi_n$ and $V_n=\delta \Psi_n$ in the quotient \eqref{Sobolev type constant on product space}, we have that
\begin{align*}
    \frac{\norm{U_n}_{\Xspace}^2 + \norm{V_n}_{\Xspace}^2}{\left(\int_{\Omega}|U_n(x,0)|^\alpha |V_n(x,0)|^\beta\,\dx\right)^{\frac{2}{\alpha +\beta}}} \geq \widetilde{S}_{(\alpha,\beta)}(\Omega).
\end{align*}
From the above inequality, we deduce that
\begin{align}\label{eq1}
    \frac{(\gamma^2+\delta^2)\norm{\Psi_n}_{\Xspace}^2}{(\gamma^\alpha \delta^\beta)^{\frac{2}{\alpha +\beta}}\left(\int_{\Omega}|\Psi_n(x,0)|^{\alpha+\beta} \,\dx\right)^{\frac{2}{\alpha +\beta}}} \geq \widetilde{S}_{(\alpha,\beta)}(\Omega).
\end{align}
Since 
$
    \frac{\gamma^2+\delta^2}{(\gamma^\alpha \delta^\beta)^{\frac{2}{\alpha +\beta}}}= \left(\frac{\gamma}{\delta} \right)^{\frac{2 \beta}{\alpha +\beta}} + \left(\frac{\gamma}{\delta} \right)^{-\frac{2 \alpha}{\alpha +\beta}},
$
let us consider the differentiable map $g:(0,\infty) \rightarrow \R$ given by
\begin{align}\label{map g}
    g(t)= t^{\frac{2 \beta}{\alpha +\beta}} + t^{-\frac{2 \alpha}{\alpha +\beta}}.
\end{align}
As $g$ is a continuous function and $\lim\limits_{t \rightarrow +\infty} g(t)= \lim\limits_{t \rightarrow 0+} g(t) =+\infty$, it has a global minimum and one can verify that $t_1=\sqrt{\frac{\alpha}{\beta}}$ is a point of global minima and moreover
\begin{align}\label{g at minima point}
    g\left(\sqrt{\frac{\alpha}{\beta}}\right) =  \left(\frac{\alpha}{\beta} \right)^{\frac{ \beta}{\alpha +\beta}} + \left(\frac{\alpha}{\beta} \right)^{-\frac{ \alpha}{\alpha +\beta}}.
\end{align}
Choosing $\gamma$ and $\delta$ in \eqref{eq1} such that $\frac{\gamma}{\delta}=\sqrt{\frac{\alpha}{\beta}}$ we have that
\begin{align}\label{eq2}
    \left[\left(\frac{\alpha}{\beta} \right)^{\frac{ \beta}{\alpha +\beta}} + \left(\frac{\alpha}{\beta} \right)^{-\frac{ \alpha}{\alpha +\beta}} \right] S_{\alpha +\beta}(\Omega) \geq \widetilde{S}_{(\alpha,\beta)}(\Omega).
\end{align}
Now we need to prove the reverse inequality in order to complete the proof. Assume that $(U_n,V_n) \in \mathbb{X}$ be a minimizing sequence for \eqref{Sobolev type constant on product space}. Define $\Phi_n = t_n V_n \in \Xspace$ for some $t_n >0$ such that
\begin{align}\label{eq3}
    \int_{\Omega}|U_n(x,0)|^{\alpha+\beta}\,\dx = \int_{\Omega}|\Phi_n(x,0)|^{\alpha+\beta}\,\dx. 
\end{align}
The Young's inequality yields
\begin{align}\label{eq4}
    \int_{\Omega}|U_n(x,0)|^{\alpha} |\Phi_n(x,0)|^{\beta}\,\dx \leq \frac{\alpha}{\alpha +\beta} \int_{\Omega}|U_n(x,0)|^{\alpha+\beta} \,\dx  + \frac{\beta}{\alpha +\beta} \int_{\Omega}|\Phi_n(x,0)|^{\alpha+\beta} \,\dx.
\end{align}
In this way using \eqref{eq4} we deduce that
\begin{align*}
    \frac{\norm{U_n}_{\Xspace}^2 + \norm{V_n}_{\Xspace}^2}{\displaystyle\left(\int_{\Omega}|U_n(x,0)|^\alpha |V_n(x,0)|^\beta\,\dx\right)^{\frac{2}{\alpha +\beta}}} &= \frac{t_n^{\frac{2\beta}{\alpha +\beta}}\left(\norm{U_n}_{\Xspace}^2 + \norm{V_n}_{\Xspace}^2 \right)}{\displaystyle\left(\int_{\Omega}|U_n(x,0)|^\alpha |\Phi_n(x,0)|^\beta\,\dx\right)^{\frac{2}{\alpha +\beta}}} \\
    &\geq \frac{t_n^{\frac{2\beta}{\alpha +\beta}} \norm{U_n}_{\Xspace}^2 }{\displaystyle\left(\int_{\Omega}|U_n(x,0)|^{\alpha +\beta}\,\dx\right)^{\frac{2}{\alpha +\beta}}} + \frac{t_n^{\frac{2\beta}{\alpha +\beta}} t_n^{-2} \norm{\Phi_n}_{\Xspace}^2 }{\displaystyle\left(\int_{\Omega}|\Phi_n(x,0)|^{\alpha +\beta}\,\dx\right)^{\frac{2}{\alpha +\beta}}}\\
    & \geq t_n^{\frac{2\beta}{\alpha +\beta}} S_{\alpha +\beta}(\Omega) + t_n^{\frac{- 2\alpha}{\alpha +\beta}} S_{\alpha +\beta}(\Omega)\\
    &= g(t_n) S_{\alpha +\beta}(\Omega)\\
    & \geq g\left(\sqrt{\frac{\alpha}{\beta}} \right) S_{\alpha +\beta}(\Omega).
\end{align*}
Passing to the limit as $n \rightarrow \infty$ in the above inequality we conclude
\begin{align}\label{eq5}
    \widetilde{S}_{(\alpha,\beta)}(\Omega) \geq \left[ \left(\frac{\alpha}{\beta} \right)^{\frac{ \beta}{\alpha +\beta}} + \left(\frac{\alpha}{\beta} \right)^{-\frac{ \alpha}{\alpha +\beta}} \right] S_{\alpha +\beta}(\Omega).
\end{align}
\end{proof}
\section{Preliminary Results}\label{Preliminary_Results}
Let us introduce some notation used in the sequel. Given a set $\mathcal{M}\subset\mathbb{R}^{N+1}_+$, we denote by $|\mathcal{M}|_s$ the measure induced by the weight $y^{1-2s}$ belonging to the Muckenhoupt class $A_2$, namely,
$$|\mathcal{M}|_s:= \kappa_s \int_{\mathcal{M}}y^{1-2s}\, \dxy.$$ 
Next, given $X_0 = (x_0,0) \in \R^{N+1}$, let $B_\rho^+(X_0):= B_\rho(X_0) \cap \R_+^{N+1}$ be the upper ball of radius $\rho>0$ centered at $X_0$, where $B_\rho(X_0)$ denotes the usual $(N+1)$-dimensional ball. Notice that
\begin{equation}\label{scaling}
    |B_\rho^+(X_0)|_s = \rho^{N+2(1-s)} |B_1^+(X_0)|_s.
\end{equation}
Assume without loss of generality that $0 \in \Sigma_\N$. For every $m \in \mathbb{N}$, we define the function
\begin{align}\label{test function em}
    \mathcal{E}_m(x) = \begin{cases}
        0, \quad &\text{ if } x \in \Omega \cap B_{\frac{1}{m}},\\
        m|x|-1, \quad &\text{ if } x \in \Omega \cap \left(B_{\frac{2}{m}} \setminus B_{\frac{1}{m}} \right),\\
        1, \quad &\text{ if } x \in \Omega \setminus B_{\frac{2}{m}},
    \end{cases}
\end{align}
where $B_r$ is an open ball with center at origin and radius $r$. Further, we consider the approximate eigenfunction $$\varphi_i^m := \mathcal{E}_m \varphi_i.$$ 
Let us also define the projection space
\begin{align}\label{H-minus-space}
     \mathbb{H}_{k}^- := \text{span}\{\varphi_i : i=1,2,...,k\} \quad \text{ and } \quad \mathbb{H}_{k}^+ = (\mathbb{H}_{k}^-)^\perp,
\end{align}
and the approximation space
\begin{align}\label{approximate-H-minus-space}
    \mathbb{H}_{m,k}^- := \text{span}\{\varphi_i^m : i=1,2,...,k\}.
\end{align}
Finally, let us denote by $\mathcal{S}:= \left\{u\in \Hspace : \|u\|_{L^2(\Omega)}=1\right\}$ the unit sphere in $L^2(\Omega)$.

\begin{lemma}\label{orthogonality of approximate eigenfunctions}
    If $\lambda_i^s$ and $\lambda_j^s$ are two distinct eigenvalues of spectral fractional Laplacian $(-\Delta)^s, s\in \left(\frac{1}2{},1\right),$ with homogeneous mixed boundary conditions and $\varphi_i,\varphi_j$ are the corresponding eigenfunctions, then we have
    \begin{itemize}
        \item[\rm(a)] $\lim\limits_{m \rightarrow \infty} \int_{\Omega} \varphi_i^m \varphi_j^m\,\dx=0$ ,
        \item[\rm(b)] $\lim\limits_{m \rightarrow \infty} \int_{\Omega} (-\Delta)^{\frac{s}{2}}\varphi_i^m (-\Delta)^{\frac{s}{2}}\varphi_j^m\,\dx=0$.
    \end{itemize}
\end{lemma}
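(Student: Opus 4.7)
The plan treats (a) and (b) in turn, ultimately reducing (b) to the strong convergence $\|(1-\mathcal{E}_m)\varphi_k\|_{\Hspace}\to 0$, which is handled through the Caffarelli--Silvestre extension.

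\textbf{Part (a)} is a dominated convergence argument. Writing $\varphi_i^m\varphi_j^m=\mathcal{E}_m^{2}\varphi_i\varphi_j$ with $0\le\mathcal{E}_m^{2}\le 1$ and $\mathcal{E}_m^{2}\to 1$ pointwise a.e.\ on $\Omega$, and noting $|\varphi_i\varphi_j|\in L^{1}(\Omega)$ by Cauchy--Schwarz, dominated convergence yields $\int_{\Omega}\varphi_i^m\varphi_j^m\,\dx\to \int_{\Omega}\varphi_i\varphi_j\,\dx=0$ by $L^{2}$-orthogonality of eigenfunctions with distinct eigenvalues.

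\textbf{Part (b)} proceeds by decomposing $\varphi_k^m=\varphi_k-(1-\mathcal{E}_m)\varphi_k$ and expanding bilinearly using \eqref{int-b-p}:
\begin{align*}
\langle\varphi_i^m,\varphi_j^m\rangle_{\Hspace}
&=\langle\varphi_i,\varphi_j\rangle_{\Hspace}-\langle\varphi_i,(1-\mathcal{E}_m)\varphi_j\rangle_{\Hspace}\\
&\quad-\langle(1-\mathcal{E}_m)\varphi_i,\varphi_j\rangle_{\Hspace}+\langle(1-\mathcal{E}_m)\varphi_i,(1-\mathcal{E}_m)\varphi_j\rangle_{\Hspace}.
\end{align*}
The first term vanishes by $\Hspace$-orthogonality. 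For the two cross terms, the eigenfunction identity $\langle\varphi_k,u\rangle_{\Hspace}=\lambda_k^{s}\int_{\Omega}\varphi_k u\,\dx$ (a consequence of \eqref{int-b-p} and $(-\Delta)^{s}\varphi_k=\lambda_k^{s}\varphi_k$) reduces each to an $L^{2}$-integral of the form $\lambda_i^{s}\int_{\Omega}\varphi_i(1-\mathcal{E}_m)\varphi_j\,\dx$, vanishing by the same dominated convergence as in (a). The last term is controlled via Cauchy--Schwarz by $\|(1-\mathcal{E}_m)\varphi_i\|_{\Hspace}\|(1-\mathcal{E}_m)\varphi_j\|_{\Hspace}$, so the whole statement reduces to proving $\|(1-\mathcal{E}_m)\varphi_k\|_{\Hspace}\to 0$ for $k=i,j$.

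\textbf{Main obstacle.} This last convergence is the delicate point, since $1-\mathcal{E}_m$ is not smoothed at the fractional scale and the norm is non-local. I would pass to the Caffarelli--Silvestre extension: set $\Phi_k=E_s[\varphi_k]$ and use the admissible competitor $(1-\mathcal{E}_m)\Phi_k$, which has trace $(1-\mathcal{E}_m)\varphi_k$ and vanishes on $\Sigma_{\D}^{*}$ since $\Phi_k$ does, in the minimization inequality \eqref{A-eq7}. Expanding the gradient,
\[
\|(1-\mathcal{E}_m)\varphi_k\|_{\Hspace}^{2}\le 2k_s\!\int_{\C_\Omega}\! y^{1-2s}|\nabla\mathcal{E}_m|^{2}\Phi_k^{2}\,\dxy+2k_s\!\int_{\C_\Omega}\! y^{1-2s}(1-\mathcal{E}_m)^{2}|\nabla\Phi_k|^{2}\,\dxy.
\]
The second integral tends to zero by dominated convergence on $\C_\Omega$ with integrable dominator $y^{1-2s}|\nabla\Phi_k|^{2}$, integrable since $\Phi_k\in\Xspace$. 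The first, concentrated on the annulus $A_m=\Omega\cap(B_{2/m}\setminus B_{1/m})$ where $|\nabla\mathcal{E}_m|^{2}=m^{2}\chi_{A_m}$, is the hardest piece: exploiting the separable form $\Phi_k(x,y)=\varphi_k(x)\theta_s(\sqrt{\lambda_k}\,y)$ (consistent with the mixed lateral boundary condition since $\varphi_k$ is a classical Laplace eigenfunction with those data), the Bessel-type profile $\theta_s$ satisfies $\int_0^\infty y^{1-2s}\theta_s^{2}(\sqrt{\lambda_k}\,y)\,\dy<\infty$, so the integral factors as a product of this finite $y$-integral with $m^{2}\int_{A_m}\varphi_k^{2}\,\dx$. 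Combining the local boundedness of $\varphi_k$ near $0\in\Sigma_{\N}$ (by elliptic regularity up to the Neumann portion of $\partial\Omega$) with $|A_m|\le C m^{-N}$ produces the bound $C m^{2-N}\to 0$ under the standing dimensional hypothesis, which closes the argument.
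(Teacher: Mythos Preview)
Your proof of (a) matches the paper's. For (b) the routes differ: the paper simply writes $\langle\varphi_i^m,\varphi_j^m\rangle_{\Hspace}=\sum_{k\ge 1}\lambda_k^s\langle\varphi_i^m,\varphi_k\rangle_2\langle\varphi_j^m,\varphi_k\rangle_2$ and passes the limit through the infinite sum term by term, whereas you reduce instead to the strong convergence $\varphi_k^m\to\varphi_k$ in $\Hspace$ via a bilinear decomposition and Cauchy--Schwarz. Your reduction is in fact more careful (the paper's interchange of limit and infinite sum is not justified as written), and the convergence you then need is precisely the first assertion of the paper's Lemma~\ref{lemma3.2}, proved there via the $(x,y)$-dependent cut-off $\mathcal{T}_m$ of \eqref{Extension-T-rho}.

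The one weak point in your argument is the choice of competitor in the extension estimate. You extend $\mathcal{E}_m$ trivially in $y$, so $|\nabla\mathcal{E}_m|^{2}=m^{2}$ is supported on the infinite slab $A_m\times(0,\infty)$, and your separable-form computation then yields only the rate $Cm^{2-N}$. The paper's $\mathcal{T}_m$, whose gradient is supported in the half-ball $B^{+}_{2/m}(0,0)$, gives instead $Cm^{2s-N}$ (see \eqref{A-eq2}--\eqref{A-eq4}), which is exactly what the standing hypothesis $N>2s$ guarantees. Your bound fails to vanish when $N=2$ (allowed, since $2s<2$ for $s\in(\tfrac12,1)$), so either replace $(1-\mathcal{E}_m(x))\Phi_k$ by $(1-\mathcal{T}_m(x,y))\Phi_k$ as the competitor, or record the extra restriction $N\ge 3$. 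With that fix your argument is complete and, unlike the paper's, does not rely on an unjustified limit--sum interchange.
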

\begin{proof}
    Since $0\leq \mathcal{E}_m(x)\leq 1$ and $\mathcal{E}_m(x) \rightarrow 1$ a.e. in $\Omega$ as $m \rightarrow \infty$, the proof of $\rm(a)$ follows immediately by the dominated convergence theorem. To prove $\rm(b)$, observe that
\begin{equation}\label{l1-eq1}
    \begin{split}
        \int_{\Omega} (-\Delta)^{\frac{s}{2}}\varphi_i^m (-\Delta)^{\frac{s}{2}}\varphi_j^m\,\dx=&\langle\varphi_i^m ,\varphi_j^m \rangle_{\Hspace}\\
        =&\langle(\varphi_i^m-\varphi_i)+\varphi_i ,(\varphi_j^m-\varphi_j)+\varphi_j \rangle_{\Hspace}\\
        =&\langle\varphi_i^m-\varphi_i ,\varphi_j^m-\varphi_j \rangle_{\Hspace}+\langle\varphi_i^m-\varphi_i,\varphi_j \rangle_{\Hspace}+\langle\varphi_i ,\varphi_j^m-\varphi_j\rangle_{\Hspace}\\
        \leq&\norm{\varphi_i^m-\varphi_i}_{\Hspace}\left(\norm{\varphi_j^m-\varphi_j}_{\Hspace}+\norm{\varphi_j}_{\Hspace}\right)\\
        &+\norm{\varphi_j^m-\varphi_j}_{\Hspace} \norm{\varphi_i}_{\Hspace}.
    \end{split}
    \end{equation}
Since, by Lemma \ref{lemma3.2} below, we have $\varphi_k^m \rightarrow \varphi_k$ in $\Hspace$ as $m \rightarrow \infty$ for any $k\geq1$, the results follows.  
\end{proof}
As commented before, due to the non-local nature of the operator $(-\Delta)^s$ it is not possible to obtain explicit expressions for the action of $(-\Delta)^s$ on a given function. In particular, we are interested on the action of $(-\Delta)^s$ on some truncation functions as those of the family $\mathcal{E}_m$. In this step we are thus required to make full use of the extension technique. Therefore, let us define the function
\begin{align}\label{Extension-T-rho}
    \mathcal{T}_m(x,y) = \begin{cases}
        0 \quad &\text{ if } (x,y) \in \C_\Omega \cap B_{\frac{1}{m}}^+(0,0),\\
        m|(x,y)|-1 \quad &\text{ if } (x,y) \in \C_\Omega \cap A_m, \text{ where } A_m:=B_{\frac{2}{m}}^+(0,0) \setminus B_{\frac{1}{m}}^+(0,0) ,\\
        1 \quad &\text{ if } (x,y) \in \C_\Omega \setminus B_{\frac{2}{m}}^+(0,0),
    \end{cases}
\end{align}
where $|(x,y)|=\left(|x|^2 + y^2 \right)^{\frac{1}{2}}, x\in \Omega, y \in (0,\infty)$. Let us stress that $\mathcal{T}_m(x,0)=\mathcal{E}_m(x)$ but $\mathcal{T}_m(x,y)\neq E_s[\mathcal{E}_m(x)]$. Because of \eqref{scaling}, we have 
\begin{align}\label{A-eq2}
\begin{split}
    \|\mathcal{T}_m(x,y)\|_{\Xspace}^2 &= \kappa_s \int_{\C_\Omega} y^{1-2s} |\nabla \mathcal{T}_m|^2\,\dxy= m^2 \kappa_s \int_{\mathcal{C}_\Omega \cap A_m} y^{1-2s}\,\dxy\\
    &\leq  ~m^2 \left[|B_{\frac{2}{m}}^+(0,0)|_s - |B_{\frac{1}{m}}^+(0,0)|_s\right]\\
    &= \left( 2^{N+2(1-s)} - 1\right)|B_{1}^+(0,0)|_s m^{2s-N}.
\end{split}
\end{align}
Next we extend \cite[Lemma 2]{Gazzola-1997} (see also \cite[Lemma 2.1]{Morais-2012}) to the fractional mixed-boundary-data setting.
\begin{lemma}\label{lemma3.2}
    The approximate eigenfunction $\varphi_i^m \rightarrow \varphi_i$ in $\Hspace$ as $m \rightarrow \infty$ and 
    \begin{align}
       \max\limits_{u \in \mathbb{H}^{-}_{m,k} \cap \mathcal{S}} \|u\|_{\Hspace}^2 \leq \lambda_k^s + C m^{2s-N},
    \end{align}
   where $C=C(N,s,k)>0$ is independent of $m$.
\end{lemma}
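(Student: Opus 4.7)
The plan is to establish both assertions by lifting the flat cutoff $\mathcal{E}_m$ to the cylinder cutoff $\mathcal{T}_m$, and then exploiting the minimizing property \eqref{A-eq7} together with the explicit size estimate \eqref{A-eq2} to transfer the analysis into the extension space $\Xspace$, where quantitative computations are tractable.

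Let $U_i := E_s[\varphi_i]$ and decompose $\varphi_i^m = \varphi_i - r_i^m$ with $r_i^m := (1-\mathcal{E}_m)\varphi_i$. Since $0 \in \Sigma_\N$ lies away from $\Sigma_\D$, for $m$ large the function $(1-\mathcal{T}_m)U_i$ belongs to $\Xspace$ and has trace $r_i^m$ on $\{y=0\}$; combining \eqref{A-eq7} and \eqref{isomtery of norms} yields
\begin{equation*}
\|r_i^m\|_{\Hspace}^2 \leq k_s\int_{\C_\Omega} y^{1-2s}|\nabla((1-\mathcal{T}_m)U_i)|^2 \, \dxy.
\end{equation*}
Expanding the gradient gives three contributions, all supported in $B_{\frac{2}{m}}^+(0,0)$. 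The leading one, $k_s\int_{A_m} y^{1-2s}\, U_i^2\, m^2\,\dxy$, is bounded by $\|U_i\|_{L^\infty}^2 \cdot m^2 |A_m|_s$, which equals $Cm^{2s-N}$ by \eqref{A-eq2} (the $L^\infty$-bound coming from the maximum principle for the extension problem applied to bounded mixed boundary data). The term $\int_{B_{2/m}^+} y^{1-2s}(1-\mathcal{T}_m)^2|\nabla U_i|^2\dxy$ is of strictly lower order, as $|\nabla U_i|$ is bounded in a neighborhood of $(0,0) \in \Sigma^*_\N$ (the hypothesis $s > 1/2$ enters here to keep $\partial_y U_i \sim y^{2s-1}$ from blowing up). The cross term is controlled by the geometric mean of the other two via Cauchy--Schwarz. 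Altogether this proves $\varphi_i^m \to \varphi_i$ in $\Hspace$ and the quantitative estimate $\|r_i^m\|_{\Hspace}^2 \leq Cm^{2s-N}$.

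For the maximization estimate, the cross terms are handled directly via the eigenfunction equation: since $(-\Delta)^s\varphi_i = \lambda_i^s\varphi_i$ and $\varphi_i,\varphi_j \in L^\infty(\Omega)$,
\begin{equation*}
\langle \varphi_i, r_j^m\rangle_{\Hspace} = \lambda_i^s\int_\Omega(1-\mathcal{E}_m)\varphi_i\varphi_j \,\dx = O(m^{-N}),
\end{equation*}
because $|1-\mathcal{E}_m|\leq \chi_{B_{2/m}}$. Expanding $\langle\varphi_i^m,\varphi_j^m\rangle_{\Hspace} = \langle\varphi_i - r_i^m, \varphi_j - r_j^m\rangle_{\Hspace}$ and combining with the previous step (and Cauchy--Schwarz on $\langle r_i^m,r_j^m\rangle_{\Hspace}$) gives, uniformly in $1\le i,j\le k$,
\begin{equation*}
\langle\varphi_i^m,\varphi_j^m\rangle_{\Hspace} = \lambda_i^s\delta_{ij} + O(m^{2s-N}), \qquad \langle\varphi_i^m,\varphi_j^m\rangle_2 = \delta_{ij} + O(m^{-N}).
\end{equation*}
Now let $u = \sum_{i=1}^k a_i\varphi_i^m \in \mathbb{H}^-_{m,k}\cap\mathcal{S}$. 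Using $\bigl(\sum|a_i|\bigr)^2 \le k\sum a_i^2$, the second identity forces $\sum_{i=1}^k a_i^2 \le 1 + O(m^{-N})$ for $m$ large, while the first together with $\lambda_i^s\leq\lambda_k^s$ for $i\leq k$ yields
\begin{equation*}
\|u\|_{\Hspace}^2 \leq \lambda_k^s\sum_{i=1}^k a_i^2 + Ckm^{2s-N}\sum_{i=1}^k a_i^2 \leq \lambda_k^s + C(N,s,k)\,m^{2s-N}.
\end{equation*}

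The main obstacle is the rigorous justification of the two regularity claims on $U_i$ near $(0,0)\in\Sigma^*_\N$: the $L^\infty$-bound on $U_i$ and the local boundedness of $|\nabla U_i|$ up to $y=0$. Both follow from the available regularity theory for the degenerate elliptic extension problem with smooth mixed boundary data, but they must be combined with the restriction $s > 1/2$ and with the fact that $0$ is chosen in the interior of $\Sigma_\N$, away from the interface $\Gamma$, so that purely Neumann-type regularity applies in a small ball around $(0,0)$.
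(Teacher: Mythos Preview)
Your argument is correct. The first claim (convergence with rate $m^{2s-N}$) matches the paper's proof exactly: both lift $(1-\mathcal{E}_m)\varphi_i$ to $(1-\mathcal{T}_m)U_i$ via \eqref{A-eq7}, expand the squared gradient norm into three pieces, and estimate each using \eqref{A-eq2} together with local bounds on $U_i$ and $\nabla U_i$. You are in fact more explicit than the paper about why $|\nabla U_i|$ stays bounded near $(0,0)\in\Sigma_{\mathcal N}^*$ and where the restriction $s>\tfrac12$ enters.

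For the max estimate the two proofs diverge. The paper writes any maximizer $z_m\in\mathbb{H}_{m,k}^-\cap\mathcal{S}$ as $a_m(z)\,\mathcal{E}_m z$ for some $z\in\mathbb{H}_k^-\cap\mathcal{S}$, bounds the normalization factor by $a_m^2(z)\le 1+Cm^{-N}$, and then repeats the extension computation on $\mathcal{T}_m Z$ with $Z=E_s[z]$, finishing via $\|z\|_{\Hspace}^2\le\lambda_k^s$ from Lemma~\ref{lemma6}. You instead compute the two Gram matrices of $\{\varphi_i^m\}_{i\le k}$ in $\Hspace$ and in $L^2(\Omega)$ and reduce to a finite-dimensional quadratic-form estimate; the decisive step is the identity $\langle\varphi_i,r_j^m\rangle_{\Hspace}=\lambda_i^s\int_\Omega(1-\mathcal{E}_m)\varphi_i\varphi_j\,\dx=O(m^{-N})$ coming from the eigenfunction equation, which is far sharper than Cauchy--Schwarz would give and is exactly what keeps the off-diagonal contribution at order $m^{2s-N}$. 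The paper's route is more uniform in that it handles the maximizer as a single function rather than $k^2$ cross terms; your route avoids the second pass through the extension estimate and makes the appearance of $\lambda_k^s$ transparent as the largest diagonal entry of the perturbed Gram matrix.
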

\begin{proof}

Let $\Phi_i(x,y)=E_s[\varphi_i]$, so that $\Phi_i(x,0)=\varphi_i(x)$. Then, by \eqref{isomtery of norms} and \eqref{A-eq7}, we have
\begin{equation}\label{A-eq1}
\begin{split}
   \|\varphi_i^m- \varphi_i\|_{\Hspace}^2&= \|E_s[\varphi_i^m-\varphi_i]\|_{\Xspace}^2=\|E_s[(\mathcal{E}_m-1)\varphi_i]\|_{\Xspace}^2\leq \|(\mathcal{T}_m-1)\Phi_i\|_{\Xspace}^2.
   \end{split}
\end{equation}
Now we estimate the norm on the right hand side in the above inequality. Let us set $w_m=\mathcal{T}_m(x,y)-1$, then 
\begin{align}\label{A-eq3}
\begin{split}
     \|w_m \Phi_i\|_{\Xspace}^2=& \kappa_s \int_{\C_\Omega} y^{1-2s} |\nabla (w_m \Phi_i)|^2\,\dxy\\
    =&\kappa_s \int_{\C_\Omega} y^{1-2s} \Phi_i^2 |\nabla w_m|^2 \,\dxy + \kappa_s \int_{\C_\Omega} y^{1-2s} w_m^2|\nabla \Phi_i|^2 \,\dxy  \\
    &+ 2 \kappa_s \int_{\C_\Omega} y^{1-2s} \langle w_m\nabla \Phi_i, \Phi_i \nabla w_m \rangle \,\dxy .\\
    =& I_1+I_2+I_3.
    \end{split}
\end{align}
Since, by \cite[Proposition 7]{Ortega-2024-Subcritical} we have $\varphi_i\in L^{\infty}(\Omega)$, by \cite{Ortega-2021-Reg} we have $\Phi_i\in L_{loc}^\infty(\C_\Omega)$. Then, as in \eqref{A-eq2}, we get
\begin{align}\label{I1-estimate}
    \begin{split}
        I_1&= \kappa_s \int_{\C_\Omega} y^{1-2s} \Phi_i^2 |\nabla w_m|^2 \,\dxy \leq \|\Phi_i\|_{L^\infty(A_m)}^2 \|w_m\|_{\Xspace}^2\\
        & \leq C_1(N,s,i) m^{2s-N}.
    \end{split}
\end{align}
{Since the function $w_m$ is supported at the Neumann boundary, by interior elliptic regularity (cf. \cite{Cabre-2010,Capella-2011})}, the estimate on $I_2$ is given by
\begin{align}\label{I2-estimate}
    \begin{split}
        I_2&= \kappa_s \int_{\C_\Omega} y^{1-2s} w_m^2|\nabla \Phi_i|^2 \,\dxy\\
        & \leq \|\nabla\Phi_i\|_{{L^\infty(B_{\frac{2}{m}}^+(0,0))}}^2 \int_{\C_\Omega} y^{1-2s} w_m^2\,\dxy\\
        &= \|\nabla\Phi_i\|_{{L^\infty(B_{\frac{2}{m}}^+(0,0))}}^2 \left(\int_{\mathcal{C}_\Omega \cap B_{\frac{1}{m}}^+(0,0)} y^{1-2s}\,\dxy +\int_{\mathcal{C}_\Omega \cap A_m} \left(2 - m|(x,y)| \right)^2 y^{1-2s}\,\dxy \right)\\
        &\leq \|\nabla\Phi_i\|_{{L^\infty(B_{\frac{2}{m}}^+(0,0))}}^2 \left(\int_{\mathcal{C}_\Omega \cap} B_{\frac{1}{m}}^+(0,0) y^{1-2s}\,\dxy +\int_{\mathcal{C}_\Omega \cap A_m} y^{1-2s}\,\dxy \right)\\
        &= \|\nabla\Phi_i\|_{{L^\infty(B_{\frac{2}{m}}^+(0,0))}}^2 \left(\int_{\mathcal{C}_\Omega \cap B_{\frac{2}{m}}^+(0,0)} y^{1-2s}\,\dxy \right)\\
        & \leq C_2(N,s,i)m^{2s-N-2} .
        \end{split}
\end{align}
Finally, using the Cauchy-Schwarz inequality and the fact that $0\leq |w_m(x,y)|\leq 1$, we get
\begin{align}\label{I3-estimate}
  \begin{split}
        I_3&= 2 \kappa_s \int_{\C_\Omega} y^{1-2s} \langle w_m\nabla \Phi_i, \Phi_i \nabla w_m \rangle \,\dxy \leq 2 \kappa_s \int_{\C_\Omega} y^{1-2s}  |w_m\nabla \Phi_i| |\Phi_i \nabla w_m | \,\dxy\\
        & \leq 2 \kappa_s  \|\nabla \Phi_i\|_{{L^\infty(A_m)}} \|\Phi_i\|_{{L^\infty(A_m)}}\int_{\mathcal{C}_\Omega \cap A_m} y^{1-2s}  |\nabla w_m | \,\dxy\\
        & = 2 \kappa_s  \|\nabla \Phi_i\|_{{L^\infty(A_m)}} \|\Phi_i\|_{{L^\infty(A_m)}} m \int_{\mathcal{C}_\Omega \cap A_m} y^{1-2s}\,\dxy\\
        &\leq C_3(N,s,i) m^{2s-N-1}.
        \end{split}
\end{align}
From \eqref{A-eq1}, \eqref{A-eq3}, \eqref{I1-estimate},\eqref{I2-estimate} and \eqref{I3-estimate}, we deduce that
\begin{align}\label{A-eq4}
   \|\varphi_i^m- \varphi_i\|_{\Hspace}^2&= \|E_s[\varphi_i^m-\varphi_i]\|_{\Xspace}^2 \notag\\
   & \leq C_1(N,s,i) m^{2s-N} + C_2(N,s,i) m^{2s-N-2} + C_3(N,s,i)m^{2s-N-1}.
\end{align}
Taking $m \rightarrow \infty$ in the above inequality, we get $\varphi_i^m \rightarrow \varphi_i$ in $\Hspace$ for each $i=1,2,...,k$.

Next, let us set $v\in \mathbb{H}_k^- \cap \mathcal{S}$, that is, $v = \sum_{i=1}^{k}\alpha_i\varphi_i$ with $\sum_{i=1}^{k}\alpha_i^2=1$. Then, we have $$v_m:= \mathcal{E}_m v = \sum_{i=1}^{k}\alpha_i\mathcal{E}_m \varphi_i = \sum_{i=1}^{k}\alpha_i \varphi_i^m,$$ which implies $v_m \in \mathbb{H}_{m,k}^-$. For each $v \in \mathbb{H}_k^- \cap \mathcal{S}$, let us set $a_m(v) := \|v_m\|_2^{-1}$, so that $a_m(v)v_m \in \mathbb{H}_{m,k}^- \cap S$. Now, we have
\begin{align*}
    1 = \int_{\Omega} a_m^2(v) v_m^2 = a_m^2(v) \left( 1 - \int_{\Omega \cap B_{\frac{2}{m}}} v^2 + \int_{\Omega \cap \left(B_{\frac{2}{m}} \setminus B_{\frac{1}{m}}\right)} \mathcal{E}_m^2 v^2 \right) \begin{cases}
        \leq a_m^2(v),\\
        \geq a_m^2(v)(1 - C\|v\|_{\infty}^{2} m^{-N}).
    \end{cases}
\end{align*}
Thus, we get
\[
1 \leq a_m^2(v) \leq 1 + C \|v\|_{\infty}^{2} m^{-N}.
\]
Let now $z_m \in \mathbb{H}^{-}_{m,k} \cap \mathcal{S}$ such that $\|z_m\|_{\Hspace}^2 = \max\limits_{\Hat{z} \in \mathbb{H}^{-}_{m,k} \cap \mathcal{S}} \|\Hat{z}\|_{\Hspace}^2$. Then
\[
z_m = a_m(z) \sum_i \beta_i \varphi_i^m = a_m(z) \mathcal{E}_m \sum_i \beta_i \varphi_i = a_m(z) \mathcal{E}_m z.
\]
Let $Z(x,y)=E_s[z]$. Then we have
\begin{equation}\label{A-eq5}
\begin{split}
    \|z_m\|_{\Hspace}^2 &=a_m^2(z)\|\mathcal{E}_m z\|_{\Hspace}^2 = a_m^2(z)\|E_s[\mathcal{T}_m(x,0) Z(x,0)]\|_{\Xspace}^2  \\
   & \leq a_m^2(z)\|\mathcal{T}_{m}(x,y) Z(x,y)\|_{\Xspace}^2  \\
    &\leq (1 + C \|z\|_{\infty}^{2} m^{-N})\|\mathcal{T}_{m}(x,y) Z(x,y)\|_{\Xspace}^2.
\end{split}
\end{equation}
Using \eqref{A-eq2} and the fact that $0\leq \mathcal{T}_m \leq 1$, we estimate the integral in the last inequality as follows
\begin{equation}\label{A-eq6}
\begin{split}
\|\mathcal{T}_{m}Z\|_{\Xspace}^2=&\kappa_s\int_{\C_\Omega} y^{1-2s} \left|\nabla (\mathcal{T}_mZ)\right|^2\,\dxy \\
    =& \kappa_s\int_{\C_\Omega} y^{1-2s} \left( \left|\mathcal{T}_m \nabla Z\right|^2 + \left|Z \nabla \mathcal{T}_m\right|^2 + 2 \langle \mathcal{T}_m \nabla Z, Z \nabla \mathcal{T}_m \rangle \right)\,\dxy \\
    \leq&  \kappa_s\int_{\C_\Omega} y^{1-2s}  | \nabla Z|^2\,\dxy + \kappa_s \|Z\|_\infty^2 \int_{\C_\Omega} y^{1-2s} | \nabla \mathcal{T}_m|^2\,\dxy \\
    & + 2\kappa_s\|Z\|_\infty \|\nabla Z\|_\infty \int_{\C_\Omega} y^{1-2s} | \nabla \mathcal{T}_m| \,\dxy \\
    =& \|E_s[z]\|_{\Xspace}^2 +\|Z\|_\infty^2 \|\mathcal{T}_m\|_{\Xspace}^2\\
    &+2\kappa_s  \|Z\|_\infty \|\nabla Z\|_\infty m \int_{\mathcal{C}_\Omega \cap A_m} y^{1-2s}\,\dxy \\
    \leq& \|z\|_{\Hspace}^2 + C_4(N,s,k) m^{2s-N} +C_5(N,s,k) m^{2s-N-1} \\
    \leq& \lambda_k^s + C_6(N,s,k) m^{2s-N}.
    \end{split}
\end{equation}
From \eqref{A-eq5} and \eqref{A-eq6}, we finally get $\|z_m\|_{\Hspace}^2 \leq \lambda_k^s + C_6(N,s,k) m^{2s-N}$, which completes the proof. 
\end{proof}
\subsection{Estimates for the Sobolev extremal functions}\hfill

Next we collect some useful estimates for the family of extremal functions of the fractional Sobolev inequality, which is given by
\begin{equation}\label{aubin}
    u_\varepsilon (x) = \frac{\varepsilon^{\frac{N-2s}{2}}}{(\varepsilon^2 + |x|^2)^{\frac{N-2s}{2}}},\quad x \in \R^N \text{ and } \varepsilon>0.
\end{equation}
Let us denote by $U_\varepsilon = E_s[u_\varepsilon]$ its $s$-extension. It is worth to mentioning that both the functions $u_\varepsilon$ and its $s$-extension $U_\varepsilon$, are self-similar functions (cf. \cite{Barrios-2012}), namely
\begin{equation}\label{eq:3.1}
    u_\varepsilon(x) = \varepsilon^{-\frac{N-2s}{2}} u_1\left(\frac{x}{\varepsilon}\right) \quad \text{and} \quad U_\varepsilon (x,y) = \varepsilon^{-\frac{N-2s}{2}} U_1 \bigg( \frac{x}{\varepsilon}, \frac{y}{\varepsilon} \bigg).
\end{equation}
Consider a smooth non-increasing cut-off function $\phi_0 (t) \in C^\infty(\mathbb{R}_+)$, satisfying $\phi_0 (t) = 1$ for $0 \leq t \leq \frac{1}{4}$ and $\phi_0 (t) = 0$ for $t \geq \frac{1}{2}$, and $|\phi_0' (t)| \leq C$ for any $t \geq 0$. 

Next, we construct a cut-off function $\phi(x,y)$ centered at a Neumann boundary point $x_0\in\Sigma_{\mathcal{N}}$. In particular, take $x_0\in\Sigma_{\mathcal{N}}$ and define, for some $\rho > 0$ small enough such that $B_\rho(x_0)\cap\Sigma_{\mathcal{D}}=\emptyset$, the function $\phi(x,y)=\phi_{\rho}(x,y)=\phi_0(\frac{r_{xy}}{\rho})$ with $r_{xy}=|(x-x_0,y)|=(|x-x_0|^2+y^2)^{\frac{1}{2}}$. For our further reference, we assume that $x_0=0 \in \Sigma_\mathcal{N}$. Then, the following holds.
\begin{lemma}\rm{(\cite[Lemma 12]{Ortega2023})}\label{lemma-Ortega2019}
The family $\{\phi_{\rho} U_{\varepsilon}\}$ and its trace on $\{y=0\}$, namely $\{\phi_{\rho} u_{\varepsilon}\}$, satisfy
\begin{equation}\label{eq:3.2}
\|\phi_\rho U_{\varepsilon}\|_{\mathcal{X}_{\Sigma_{\mathcal{D}}}^s(\mathcal{C}_{\Omega})}^2=\frac{1}{2}\|U_{\varepsilon}\|_{\mathcal{X}_{\Sigma_{\mathcal{D}}}^s(\mathcal{C}_{\Omega})}^2+ O\left(\left(\frac{\varepsilon}{\rho}\right)^{N-2s}\right),
\end{equation}
and
\begin{equation}\label{eq:3.3}
\|\phi_\rho u_{\varepsilon}\|_{L^{2_s^*}(\Omega)}^{2_s^*}=\frac{1}{2}\|u_{\varepsilon}\|_{L^{2_s^*}(\mathbb{R}^N)}^{2_s^*}+O\left(\left(\frac{\varepsilon}{\rho}\right)^N\right).
\end{equation}
for $\varepsilon>0$ small enough.
\end{lemma}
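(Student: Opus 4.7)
The plan is to localize near $x_0=0\in\Sigma_{\mathcal{N}}$, flatten the boundary via a smooth diffeomorphism, and exploit the even symmetry of the Aubin--Talenti profile across the flattened boundary to produce the factor $1/2$; the cutoff and the Jacobian of the flattening are then treated as perturbations controlled by the concentration $\varepsilon\ll\rho$ together with the decay of $U_\varepsilon$. Fix $\rho>0$ so small that $B_\rho(0)\cap\Sigma_{\D}=\emptyset$ and choose a smooth chart $\Psi$ mapping $\overline\Omega\cap B_\rho(0)$ onto a neighborhood of $0$ in $\overline{\R^N_+}$ with $d\Psi(0)=\mathrm{Id}$. Extending $\Psi$ by the identity in the $y$-variable flattens $\partial_L\C_\Omega$ without touching the weight $y^{1-2s}$, and the pulled-back metric is $g^{ij}=\delta^{ij}+O(|x|)$.

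For the first identity, I would start from the pointwise expansion $|\nabla(\phi_\rho U_\varepsilon)|^2=\phi_\rho^2|\nabla U_\varepsilon|^2+U_\varepsilon^2|\nabla\phi_\rho|^2+2\phi_\rho U_\varepsilon\langle\nabla U_\varepsilon,\nabla\phi_\rho\rangle$ and split $\|\phi_\rho U_\varepsilon\|^2_{\Xspace}=J_1+J_2+J_3$ accordingly. The terms $J_2,J_3$ are supported in the spherical shell $A_\rho=\{\rho/4\leq r_{xy}\leq\rho/2\}$ on which $|\nabla\phi_\rho|\leq C/\rho$, while the self-similarity \eqref{eq:3.1} together with the decay $U_1(X)\lesssim|X|^{-(N-2s)}$ and $|\nabla U_1(X)|\lesssim|X|^{-(N-2s+1)}$ give $U_\varepsilon\lesssim \varepsilon^{(N-2s)/2}\rho^{-(N-2s)}$ and $|\nabla U_\varepsilon|\lesssim \varepsilon^{(N-2s)/2}\rho^{-(N-2s+1)}$ on $A_\rho$. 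Integrating the weight $y^{1-2s}$ on $A_\rho$ produces a factor $\rho^{N+2-2s}$, so $J_2+J_3=O((\varepsilon/\rho)^{N-2s})$. For $J_1$ I would rewrite $\phi_\rho^2=1-(1-\phi_\rho^2)$, so that $J_1$ equals $k_s\int_{\C_\Omega}y^{1-2s}|\nabla U_\varepsilon|^2\,\dxy$ minus a tail supported in $\{r_{xy}\geq\rho/4\}$; the tail, after the scaling $x=\varepsilon z$, $y=\varepsilon w$ and the same decay estimates, is again $O((\varepsilon/\rho)^{N-2s})$.

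The factor $1/2$ comes from the symmetry step: after applying the flattening diffeomorphism $\Psi$, the relevant portion of $\C_\Omega$ becomes $\{x_N>0,\ y>0\}$ near $(0,0)$, while $U_\varepsilon$ is radially symmetric in $(x,y)$ about $(0,0)$, hence even in $x_N$. Thus the integrand $y^{1-2s}|\nabla U_\varepsilon|^2$ is even in $x_N$, so integrating over $\{x_N>0\}$ produces exactly half of the integral over $\R^{N+1}_+$; the Jacobian of $\Psi$ contributes an extra term bounded by $C\int_{\C_\Omega\cap\mathrm{supp}\,\phi_\rho}r_{xy}\,y^{1-2s}|\nabla U_\varepsilon|^2\,\dxy$, which by the scaling argument is once more $O((\varepsilon/\rho)^{N-2s})$. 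Combining these pieces yields \eqref{eq:3.2}. Estimate \eqref{eq:3.3} follows from the same scheme applied to the trace integral: the bulk $\int_\Omega u_\varepsilon^{2_s^*}\,\dx$ differs from $\tfrac12\|u_\varepsilon\|^{2_s^*}_{L^{2_s^*}(\R^N)}$ only through the Jacobian of $\Psi$ and through the region outside the chart, both of which produce $O((\varepsilon/\rho)^N)$ by concentration; the cutoff error $\int_\Omega(1-\phi_\rho^{2_s^*})u_\varepsilon^{2_s^*}\,\dx$ is dominated by $\int_{|x|\geq\rho/4}u_\varepsilon^{2_s^*}\,\dx=O((\varepsilon/\rho)^N)$ after the rescaling $x=\varepsilon z$.

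The main obstacle I anticipate is the book-keeping of the Jacobian error: after pulling back, $|\nabla U_\varepsilon|^2$ is replaced by $g^{ij}(\Psi(x))\partial_i U_\varepsilon\,\partial_j U_\varepsilon$, and the discrepancy from the Euclidean case is pointwise $O(|x|)|\nabla U_\varepsilon|^2$. Keeping this term of size $O((\varepsilon/\rho)^{N-2s})$ requires the sharp power decay of $|\nabla U_\varepsilon|$ away from the origin together with a careful scaling argument, and this is where the smoothness of $\Sigma_{\N}$ is actively used; the cross-term $J_3$ is delicate for the same reason, since one cannot afford to lose any power of $\rho$ in the estimate.
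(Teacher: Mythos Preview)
The paper does not supply a proof of this lemma; it is quoted directly from \cite[Lemma~12]{Ortega2023}. So there is nothing in the present paper to compare against. Your reconstruction is the standard route for boundary--concentration estimates of Brezis--Nirenberg type and is correct in outline: the pointwise splitting of $|\nabla(\phi_\rho U_\varepsilon)|^2$, the decay $U_1(X)\lesssim|X|^{-(N-2s)}$, $|\nabla U_1(X)|\lesssim|X|^{-(N-2s+1)}$, the self--similar rescaling, and the reflection across the flattened boundary to produce the $\tfrac12$ are exactly the ingredients one expects in the cited proof.

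One point needs tightening. After the change of variables $x=\varepsilon z,\ y=\varepsilon w$, the Jacobian remainder you write as $\int_{\mathrm{supp}\,\phi_\rho} r_{xy}\,y^{1-2s}|\nabla U_\varepsilon|^2$ becomes $\varepsilon\int_{|Z|\le\rho/\varepsilon} r_{zw}\,w^{1-2s}|\nabla U_1|^2$, and for $N-2s>1$ the inner integral converges as $\rho/\varepsilon\to\infty$; hence this contribution is $O(\varepsilon)$, not $O((\varepsilon/\rho)^{N-2s})$. Since the paper later applies the lemma with $\rho=2/m\to0$ and $\varepsilon=m^{-(N+2s)/(2s)}$, the two scales are genuinely different and $O(\varepsilon)$ is the larger one. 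This is not fatal---choosing the chart so that $g^{ij}=\delta^{ij}+O(|x|^2)$ (boundary normal coordinates) upgrades the Jacobian error to $O(\varepsilon^2)$, or one can simply track the curvature term separately in the downstream estimates---but your sentence ``which by the scaling argument is once more $O((\varepsilon/\rho)^{N-2s})$'' overstates what a first--order flattening gives.
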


The above estimates will be crucial in order to construct paths for the energy functional associated to the system \eqref{main problem} below a critical threshold for which the compactness holds that we compute in next section.
\section{Geometry and Compactness}\label{Geom_Compact}
First we recall the following Rabinowitz Linking theorem \cite[Theorem 5.3]{Rabinowitz-1986-book}.
\begin{theorem}[Rabinowitz Linking Theorem]
    Let $E$ be a real Banach space with $E= V \oplus X$, where $V$ is finite dimensional. Suppose $J \in C^{1}(E,\R)$ satisfies Palais-Smale condition (in short (PS) condition) and
    \begin{enumerate}\rm
        \item there are constants $\rho,\sigma>0$ such that $J|_{\partial B_\rho \cap X} \geq \sigma$ and
        \item there is an $e \in \partial B_1 \cap X$ and $R> \sigma$ such that if $Q \equiv (\overline{B}_R \cap V) \oplus \{re: 0<r<R\}$, then $J|_{\partial Q} \leq 0$.
    \end{enumerate}
Then $J$ possesses a critical value $\widetilde{c} \geq \sigma$ which can be characterized as $\widetilde{c} = \inf\limits_{h \in \mathcal{H}} \sup\limits_{u \in Q} J(h(u))$, where 
\begin{align*}
    \mathcal{H} = \{ h : \overline{Q} \to E \text{ continuous}; \ h|_{\partial Q} = \text{id}\}.
\end{align*}
In the above, $\partial Q$ refers to the boundary of $Q$ relative to $V \oplus span\{e\}$.
\end{theorem}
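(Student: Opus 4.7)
The plan is to prove the Rabinowitz Linking Theorem by the classical two–step minimax scheme: first a topological \emph{linking} argument giving $\widetilde{c}\geq\sigma$, and then a deformation argument using the (PS) condition to show that $\widetilde{c}$ is a critical value.

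\textbf{Step 1 (Linking).} The key is to prove that for every $h\in\mathcal{H}$, the image $h(\overline{Q})$ meets $\partial B_\rho\cap X$; once this is done, hypothesis (1) immediately yields $\sup_{u\in Q}J(h(u))\geq\sigma$, hence $\widetilde{c}\geq\sigma$. Let $P_V,P_X$ denote the continuous projections associated to $E=V\oplus X$, and consider the continuous map $\Phi:\overline{Q}\to V\oplus\mathbb{R}$ defined by
\[
\Phi(u)=\bigl(P_V h(u),\ \|P_X h(u)\|-\rho\bigr).
\]
I want $\Phi$ to vanish somewhere on $\overline{Q}$. Since $Q\subset V\oplus\mathbb{R}e$ is a cylinder of finite dimension $\dim V+1$, this is a finite-dimensional problem and the natural tool is Brouwer degree. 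On $\partial Q$ we have $h=\mathrm{id}$, so $\Phi(v+re)=(v,r-\rho)$, which vanishes only at $u=\rho e$; provided $\rho$ is smaller than $R$ (reading $R>\rho$ in hypothesis (2)), this point lies in the interior of $Q$, so $\Phi\neq 0$ on $\partial Q$. A straight-line homotopy from $\Phi$ to the map $u\mapsto(P_V u,\|P_X u\|-\rho)$, combined with the fact that on $\partial Q$ this homotopy never vanishes, shows $\deg(\Phi,Q,0)=\deg(\mathrm{id}-\rho e,Q,0)=1\neq 0$, and the linking property follows.

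\textbf{Step 2 (Criticality via deformation).} Suppose by contradiction that $\widetilde{c}$ is not a critical value of $J$. Using the (PS) condition one applies the standard quantitative deformation lemma to produce, for suitably small $\bar\epsilon>0$ (chosen with $\bar\epsilon<(\widetilde{c}-0)/2$ so that sublevel sets at $\widetilde{c}\pm 2\bar\epsilon$ stay above $0$), a continuous deformation $\eta:E\to E$ satisfying $\eta(J^{\widetilde{c}+\bar\epsilon})\subset J^{\widetilde{c}-\bar\epsilon}$ and $\eta=\mathrm{id}$ outside $J^{-1}([\widetilde{c}-2\bar\epsilon,\widetilde{c}+2\bar\epsilon])$. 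By definition of $\widetilde{c}$, choose $h\in\mathcal{H}$ with $\sup_{u\in Q}J(h(u))<\widetilde{c}+\bar\epsilon$. Since $J\leq 0$ on $\partial Q$ by hypothesis (2) and $h|_{\partial Q}=\mathrm{id}$, the deformation $\eta$ fixes $h(\partial Q)=\partial Q$ pointwise, so $\eta\circ h\in\mathcal{H}$; yet $\sup_Q J\circ(\eta\circ h)\leq \widetilde{c}-\bar\epsilon$, contradicting the definition of $\widetilde{c}$.

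\textbf{Main obstacle.} The delicate step is the linking in Step 1, specifically verifying non-vanishing of $\Phi$ on $\partial Q$ and correctly setting up the Brouwer degree homotopy within the finite-dimensional slice $V\oplus\mathbb{R}e$; once this is in place, Step 2 is essentially mechanical provided the deformation lemma is invoked carefully so as to preserve the boundary condition $h|_{\partial Q}=\mathrm{id}$, which crucially uses that the critical level $\widetilde{c}\geq\sigma>0$ is separated from the boundary-energy upper bound $0$.
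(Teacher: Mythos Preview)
The paper does not prove this theorem at all: it is simply recalled, with a citation to \cite[Theorem 5.3]{Rabinowitz-1986-book}, as a known tool to be applied later. So there is no ``paper's own proof'' to compare against.

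Your sketch is the classical argument and is essentially correct. Two small remarks. First, you correctly observe that the linking step needs $R>\rho$ so that $\rho e$ lies in the interior of $Q$; the statement as written has $R>\sigma$, which is a transcription slip from the source (Rabinowitz's Theorem~5.3 has $R>\rho$), and your parenthetical ``reading $R>\rho$'' is the right fix. Second, in Step~2 you want $\bar\epsilon$ small enough that $\widetilde{c}-2\bar\epsilon>0$; since $\widetilde{c}\geq\sigma>0$ this is fine, and then $J\leq 0$ on $\partial Q$ guarantees $\eta$ fixes $\partial Q$ pointwise. With these points in order the argument goes through exactly as in Rabinowitz's book.
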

Next, for \( m,k \in \mathbb{N} \) and $\varrho,\varepsilon>0$, we define
\[
Q_m^\varepsilon = [0,\varrho] {\bf U}_\varepsilon \oplus (\overline{B}_R \cap [\mathbb{H}_{m,k}^-]^2),
\]
where \( {\bf U}_\varepsilon = (\mathcal{A} u_\varepsilon^m, \mathcal{B} u_\varepsilon^m) \) with $u_\varepsilon^m =\eta_m u_\varepsilon =\phi_\frac{2}{m}(\cdot,0) u_\varepsilon$, $u_\varepsilon$ is given in \eqref{aubin} and \( \mathcal{A,B} > 0 \) such that $\frac{\mathcal{A}}{\mathcal{B}} = \sqrt{\frac{\alpha}{\beta}}$. Let us recall that we are assuming $0\in\Sigma_{\mathcal{N}}$, so $u_\varepsilon^m$ is a truncation centered at a point of the Neumann boundary of the family \eqref{aubin}. Further, we define 
\[
\mathcal{H} = \{ h : \overline{Q}_m^\varepsilon \to \mathbb{H} \text{ continuous}; \ h|_{\partial Q_m^\varepsilon} = \text{id} \},
\]
and let
\begin{equation}
    c_{\varepsilon} = \inf_{h \in \mathcal{H}} \sup_{{\bf U} \in Q_m^\varepsilon} J(h({\bf U})).
\end{equation}
Let $\mu_1 \leq \mu_2$ be the real eigenvalues of the matrix 
$A = \begin{pmatrix} 
a & b \\ 
b & c 
\end{pmatrix} \in M_{2\times2}(\R)$ and ${\bf U}=(u,v)$ with $|{\bf U}|_{\R^2}^2 = u^2 + v^2$. Then, the quadratic form induced by $A$, say
\begin{align}\label{Quadratic-form}
    (A{\bf U},{\bf U})_{\R^2} = au^2 + 2buv + cv^2,
\end{align}
is such that 
\begin{equation}\label{quadratic-remark}
        \mu_1|{\bf U}|_{\R^2}^2 \leq (A{\bf U},{\bf U})_{\R^2} \leq \mu_2|{\bf U}|_{\R^2}^2,
\end{equation}
since $\mu_1$ and $\mu_2$ are the minimum and the maximum respectively of the quadratic form \eqref{Quadratic-form} restricted to the unity sphere.
\begin{lemma}{\rm(\cite[Lemma 6]{Ortega-2024-Subcritical})}\label{lemma6}
    For any $k\in \mathbb{N}$, one has
    \begin{align*}
        \lambda_k^s = \inf_{u \in \mathbb{H}_{k-1}^+} \frac{\|u\|_{\Hspace}^2}{\|u\|_{L^2{(\Omega)}}^2} = \sup_{u \in \mathbb{H}_{k}^-} \frac{\|u\|_{\Hspace}^2}{\|u\|_{L^2{(\Omega)}}^2}.
    \end{align*}
\end{lemma}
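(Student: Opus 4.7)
The plan is to exploit the spectral decomposition of $(-\Delta)^s$ directly, since the eigenfunctions $\{\varphi_j\}_{j\geq 1}$ of the mixed-boundary Laplacian form an $L^2$-orthonormal basis and are also orthogonal for the scalar product $\langle\cdot,\cdot\rangle_{\Hspace}$. For any $u\in\Hspace$, writing $u=\sum_{j\geq 1}a_j\varphi_j$ with $a_j=\langle u,\varphi_j\rangle_2$, Parseval together with the very definition of the norm on $\Hspace$ gives
\begin{equation*}
\|u\|_{L^2(\Omega)}^2=\sum_{j\geq 1}a_j^2,\qquad \|u\|_{\Hspace}^2=\sum_{j\geq 1}\lambda_j^s a_j^2.
\end{equation*}
Hence the Rayleigh quotient $R(u):=\|u\|_{\Hspace}^2/\|u\|_{L^2(\Omega)}^2$ reduces to the weighted average $\sum\lambda_j^s a_j^2/\sum a_j^2$ over the coefficients of $u$.

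To prove the supremum identity, take $u\in\mathbb{H}_k^-=\mathrm{span}\{\varphi_1,\dots,\varphi_k\}\setminus\{0\}$. Then $a_j=0$ for $j>k$ and, since $\lambda_j^s\leq\lambda_k^s$ for every $j\leq k$, one obtains
\begin{equation*}
R(u)=\frac{\sum_{j=1}^k \lambda_j^s a_j^2}{\sum_{j=1}^k a_j^2}\leq \lambda_k^s,
\end{equation*}
with equality realized at $u=\varphi_k$, so the supremum is exactly $\lambda_k^s$. For the infimum identity, take $u\in\mathbb{H}_{k-1}^+$; by the definition \eqref{H-minus-space} of the orthogonal complement, $a_j=\langle u,\varphi_j\rangle_2=0$ for $j\leq k-1$, and the monotonicity $\lambda_j^s\geq\lambda_k^s$ for $j\geq k$ yields
\begin{equation*}
R(u)=\frac{\sum_{j\geq k}\lambda_j^s a_j^2}{\sum_{j\geq k} a_j^2}\geq \lambda_k^s,
\end{equation*}
again with equality at $u=\varphi_k\in\mathbb{H}_{k-1}^+$.

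There is essentially no obstacle: the only subtle point is checking that $\mathbb{H}_{k-1}^+$, defined in \eqref{H-minus-space} as the orthogonal complement with respect to the $\Hspace$ inner product, coincides with the $L^2$-orthogonal complement of $\mathrm{span}\{\varphi_1,\dots,\varphi_{k-1}\}$ inside $\Hspace$. This follows at once from \eqref{int-b-p} and the eigenvalue relation, since $\langle u,\varphi_j\rangle_{\Hspace}=\lambda_j^s\langle u,\varphi_j\rangle_2$, so vanishing of one pairing is equivalent to vanishing of the other. Once this identification is made, both identities are immediate from the spectral expansion, and the two characterizations match at the common extremizer $\varphi_k$.
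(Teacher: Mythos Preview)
Your argument is correct and is the standard spectral-decomposition proof of this variational characterization. Note that the paper does not actually prove this lemma: it is quoted verbatim from \cite[Lemma 6]{Ortega-2024-Subcritical} and stated without proof, so there is no ``paper's own proof'' to compare against; your write-up supplies exactly the routine verification one would expect.
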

\begin{lemma}\label{geometry-1}
   If $\mu_2 < \lambda_{k+1}^s$, there exist constants $\sigma, \rho>0$ such that
   \begin{align*}
       J(u,v) \geq \sigma, \quad \forall (u,v) \in S_\rho:= \partial B_\rho \cap [\mathbb{H}_k^+]^2,
   \end{align*}
   where $B_\rho = \{ (u,v) \in \mathbb{H}: \|(u,v)\|_{\mathbb{H}} < \rho\}$ and $[\mathbb{H}_k^+]^2 = \mathbb{H}_k^+ \times \mathbb{H}_k^+$.
\end{lemma}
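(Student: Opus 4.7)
The plan is to exhibit a lower bound of the form $J(u,v)\geq c_1\rho^2-c_2\rho^{\alpha+\beta}$ on $S_\rho$, which is strictly positive for $\rho$ small since $\alpha+\beta=2_s^*>2$. The key ingredient is that restricting to $[\mathbb{H}_k^+]^2$ gives a uniform Poincar\'e-type control in terms of $\lambda_{k+1}^s$, which will dominate the linear perturbation bounded by $\mu_2$.

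First, let $(u,v)\in[\mathbb{H}_k^+]^2$. By Lemma \ref{lemma6} (applied with index $k+1$, i.e.\ using $\lambda_{k+1}^s=\inf_{w\in\mathbb{H}_k^+}\|w\|_{\Hspace}^2/\|w\|_{L^2(\Omega)}^2$), I would bound
\begin{equation*}
\int_{\Omega}(u^2+v^2)\,\dx \leq \frac{1}{\lambda_{k+1}^s}\bigl(\|u\|_{\Hspace}^2+\|v\|_{\Hspace}^2\bigr)=\frac{1}{\lambda_{k+1}^s}\|(u,v)\|_{\mathbb{H}}^2.
\end{equation*}
Combining this with the eigenvalue bound \eqref{quadratic-remark} for the quadratic form associated with $A$, I get
\begin{equation*}
\int_{\Omega}(au^2+2buv+cv^2)\,\dx \leq \mu_2\int_{\Omega}(u^2+v^2)\,\dx \leq \frac{\mu_2}{\lambda_{k+1}^s}\|(u,v)\|_{\mathbb{H}}^2.
\end{equation*}

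Next, to control the coupling term, I would invoke the Sobolev-type constant defined in \eqref{Sobolev type constant on product space}, which yields
\begin{equation*}
\int_{\Omega}|u|^\alpha|v|^\beta\,\dx \leq \widetilde{S}_{(\alpha,\beta)}(\Omega)^{-\frac{\alpha+\beta}{2}}\bigl(\|u\|_{\Hspace}^2+\|v\|_{\Hspace}^2\bigr)^{\frac{\alpha+\beta}{2}}=\widetilde{S}_{(\alpha,\beta)}(\Omega)^{-\frac{\alpha+\beta}{2}}\|(u,v)\|_{\mathbb{H}}^{\alpha+\beta}.
\end{equation*}
Plugging both estimates into the expression \eqref{Energy functional} for $J$, setting $\rho=\|(u,v)\|_{\mathbb{H}}$, gives
\begin{equation*}
J(u,v) \geq \frac{1}{2}\left(1-\frac{\mu_2}{\lambda_{k+1}^s}\right)\rho^2 - \widetilde{S}_{(\alpha,\beta)}(\Omega)^{-\frac{\alpha+\beta}{2}}\rho^{\alpha+\beta}.
\end{equation*}

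Since we assume $\mu_2<\lambda_{k+1}^s$, the coefficient $\kappa:=\tfrac{1}{2}(1-\mu_2/\lambda_{k+1}^s)$ is strictly positive. As $\alpha+\beta=2_s^*>2$, the function $\rho\mapsto \kappa\rho^2 - C\rho^{\alpha+\beta}$ is positive on a punctured interval $(0,\rho_0)$ and attains a strictly positive value at any fixed $\rho\in(0,\rho_0)$. Choosing such a $\rho$ and setting $\sigma:=\kappa\rho^2 - \widetilde{S}_{(\alpha,\beta)}(\Omega)^{-\frac{\alpha+\beta}{2}}\rho^{\alpha+\beta}>0$ completes the proof.

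There is no real obstacle here beyond carefully applying the correct spectral inequality on $\mathbb{H}_k^+$ (Lemma \ref{lemma6} indexed at $k+1$) and the vectorial Sobolev inequality $\widetilde{S}_{(\alpha,\beta)}(\Omega)$; the rest is a standard mountain-pass-type computation made possible by the strict subcriticality $\alpha+\beta>2$ of the leading nonlinearity relative to the quadratic part.
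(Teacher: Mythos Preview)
Your proof is correct and follows essentially the same approach as the paper's: bound the quadratic form via \eqref{quadratic-remark}, use the spectral inequality of Lemma~\ref{lemma6} on $\mathbb{H}_k^+$, control the coupling term by the Sobolev-type constant, and conclude by comparing powers $2<2_s^*$. The only cosmetic difference is that the paper further weakens the constant by invoking Theorem~\ref{important relation} to replace $\widetilde{S}_{(\alpha,\beta)}(\Omega)$ with $S_{\alpha+\beta}(\Omega)$, a step that is not needed for the conclusion.
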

\begin{proof}
The energy functional $J:\mathbb{H} \rightarrow \R$ associated with \eqref{Euler Lagrange Identity} is given by
\begin{align*}
    J(u,v)=& \frac{1}{2} \|(u,v)\|_{\mathbb{H}}^2-\frac{1}{2}\int_{\Omega} (A{\bf U},{\bf U})_{\R^2}\,\dx  -\int_{\Omega}|u|^\alpha |v|^\beta \,\dx\\
     \geq& \frac{1}{2} \|(u,v)\|_{\mathbb{H}}^2-\frac{\mu_2}{2}\int_{\Omega} (u^2+v^2)\,\dx -\int_{\Omega}|u|^\alpha |v|^\beta \,\dx\\
     \geq& \frac{1}{2} \left(1 -\frac{\mu_2}{\lambda_{k+1}^s} \right) \|(u,v)\|_{\mathbb{H}}^2-  \frac{\|(u,v)\|_{\mathbb{H}}^{2_s^*} }{ \widetilde{S}_{(\alpha,\beta)}(\Omega)^{\frac{2_s^*}{2}}}\\
      \geq& \frac{1}{2} \left(1 -\frac{\mu_2}{\lambda_{k+1}^s} \right) \|(u,v)\|_{\mathbb{H}}^2-  \frac{\|(u,v)\|_{\mathbb{H}}^{2_s^*} }{ S_{\alpha +\beta}(\Omega)^{\frac{2_s^*}{2}}}\\
    \geq & \frac{1}{2} \left(1 -\frac{\mu_2}{\lambda_{k+1}^s} \right) \rho^2-  \left( \frac{1}{ S_{\alpha +\beta}(\Omega)^{\frac{2_s^*}{2}}}\right) \rho^{2_s^*},
\end{align*}
where the inequalities follow by \eqref{quadratic-remark}, Lemma \ref{lemma6}, \eqref{Sobolev type constant on product space} and Theorem \ref{important relation} (which implies $\widetilde{S}_{(\alpha,\beta)}(\Omega) \geq S_{\alpha +\beta}(\Omega)$). Since $2<2_s^*$, we can choose $\rho>0$ small enough such that $\sigma=\sigma(\rho)$, which is defined as 
\begin{align*}
   \sigma:= \frac{1}{2} \left(1 -\frac{\mu_2}{\lambda_{k+1}^s} \right) \rho^2- \left( \frac{1}{ S_{\alpha +\beta}(\Omega)^{\frac{2_s^*}{2}}}\right) \rho^{2_s^*},
\end{align*}
is positive. Hence, the result follows immediately for such $\rho,\sigma>0$.
\end{proof}
\begin{lemma}\label{geometry-2}
    Let $\lambda_k^s \leq \mu_1 \leq \mu_2 < \lambda_{k+1}^s$. {There exist constants $\varrho_0,R_0$ such that, for $\varrho \geq \varrho_0$ and $R\geq R_0$, we have
\[\max_{(u,v)\in\partial Q_m^\varepsilon} J(u,v) \leq B(m),\quad \text{ where } B(m) \rightarrow 0 \text{ as } m \rightarrow +\infty.\]}
\end{lemma}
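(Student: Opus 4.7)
The plan is to decompose the relative boundary $\partial Q_m^\varepsilon$ in the finite-dimensional subspace $[\mathbb{H}_{m,k}^-]^2 \oplus \mathbb{R}\mathbf{U}_\varepsilon$ into three faces and estimate $J$ on each. Writing a generic element $(u,v) = t\mathbf{U}_\varepsilon + W$ with $W = (w_1,w_2) \in \overline{B}_R\cap[\mathbb{H}_{m,k}^-]^2$ and $t \in [0,\varrho]$, the three faces are: (A) $t=0$; (B) $t=\varrho$; and (C) $\|W\|_{\mathbb{H}}=R$ with $t\in[0,\varrho]$. The essential structural tool is the disjoint-support property of the two families of cutoffs: $\eta_m$ is supported in $\overline{B_{1/m}(0)}$, whereas $\mathcal{E}_m$ vanishes on $B_{1/m}(0)$; hence $u_\varepsilon^m$ and each $w_i\in\mathbb{H}_{m,k}^-$ have essentially disjoint supports. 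This yields the $L^2$-orthogonality $\int_\Omega u_\varepsilon^m w_i\,\dx = 0$, the decoupling $\int_\Omega(A\mathbf{U},\mathbf{U})_{\mathbb{R}^2}\,\dx = t^2\int_\Omega(A\mathbf{U}_\varepsilon,\mathbf{U}_\varepsilon)_{\mathbb{R}^2}\,\dx + \int_\Omega(AW,W)_{\mathbb{R}^2}\,\dx$, and the splitting
\[
\int_\Omega|u|^\alpha|v|^\beta\,\dx = t^{2_s^*}\mathcal{A}^\alpha\mathcal{B}^\beta\int_\Omega|u_\varepsilon^m|^{2_s^*}\,\dx + \int_\Omega|w_1|^\alpha|w_2|^\beta\,\dx.
\]

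On face (A), Lemma \ref{lemma3.2} gives $\|W\|_{\mathbb{H}}^2 \leq (\lambda_k^s + Cm^{2s-N})\|W\|_{L^2}^2$; combined with $\mu_1\geq\lambda_k^s$ and $(AW,W)_{\mathbb{R}^2}\geq\mu_1 |W|_{\mathbb{R}^2}^2$, and discarding the nonnegative coupling term, this yields $J(W) \leq CR^2m^{2s-N}$, which is $O_m$. On face (B), the super-critical term $\varrho^{2_s^*}\mathcal{A}^\alpha\mathcal{B}^\beta\int|u_\varepsilon^m|^{2_s^*}\,\dx \gtrsim \varrho^{2_s^*}$ (bounded below via Lemma \ref{lemma-Ortega2019}) dominates the $O(\varrho^2+R^2)$ quadratic part once $\varrho \geq \varrho_0$ is large enough, giving $J \leq 0$.

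Face (C) is the main obstacle. Writing $J(t\mathbf{U}_\varepsilon + W) = g(t) + h(W) + t\langle\mathbf{U}_\varepsilon,W\rangle_{\mathbb{H}}$, where
\[
g(t) := \tfrac{t^2}{2}\left[\|\mathbf{U}_\varepsilon\|_{\mathbb{H}}^2 - \int_\Omega(A\mathbf{U}_\varepsilon,\mathbf{U}_\varepsilon)_{\mathbb{R}^2}\,\dx\right] - t^{2_s^*}\mathcal{A}^\alpha\mathcal{B}^\beta\int_\Omega|u_\varepsilon^m|^{2_s^*}\,\dx
\]
is a standard Brezis--Nirenberg-type profile whose maximum is bounded by the Aubin--Talenti critical level, and $h(W)$ reduces to the face-(A) expression minus $\int|w_1|^\alpha|w_2|^\beta\,\dx$. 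The super-critical part of $h$ becomes strongly negative for large $R$ whenever $(w_1,w_2)$ is non-degenerate; on the degenerate subspace (where $w_1=0$ or $w_2=0$) a scalar-type argument based on $a,c\in[\mu_1,\mu_2]\subset[\lambda_k^s,\lambda_{k+1}^s)$, Lemma \ref{lemma6} and Lemma \ref{lemma3.2} reduces the bound to $CR^2m^{2s-N}$.

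The truly delicate piece is the mixed Sobolev pairing $\langle\mathbf{U}_\varepsilon,W\rangle_{\mathbb{H}}$, which does not vanish despite the disjoint traces because $(-\Delta)^s$ is non-local (the extension technique is insensitive here, as the extensions overlap in the cylinder). To handle it I would invoke the spectral identity
\[
\langle u_\varepsilon^m,\varphi_i^m\rangle_{\mathbb{H}} = \sum_{j\geq 1}\lambda_j^s\langle u_\varepsilon^m,\varphi_j\rangle_2\langle\varphi_i^m,\varphi_j\rangle_2,
\]
and note that for each fixed $\varepsilon>0$, $\langle u_\varepsilon^m,\varphi_j\rangle_2\to 0$ by dominated convergence (the cutoff $\eta_m$ shrinks to $\{0\}\subset\Sigma_{\mathcal{N}}$) while $\langle\varphi_i^m,\varphi_j\rangle_2\to\delta_{ij}$, giving $|\langle\mathbf{U}_\varepsilon,W\rangle_{\mathbb{H}}|\leq\delta_m R$ with $\delta_m\to 0$. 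Calibrating the constants in the order $\varepsilon$ small (so Lemma \ref{lemma-Ortega2019} applies sharply), then $R_0$ and $\varrho_0$ large (to absorb the bounded $g(t^\ast)$ via the negative super-critical part of $h$), and finally $m_0$ large (to make the residual $CR^2m^{2s-N} + \varrho R\delta_m$ fit into the asserted $O_m$) completes the estimate on face (C) and concludes the proof.
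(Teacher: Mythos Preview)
Your three–face decomposition, the use of the disjoint supports of $u_\varepsilon^m$ and $\mathbb{H}_{m,k}^-$, and your spectral treatment of the cross term $\langle\mathbf U_\varepsilon,W\rangle_{\mathbb H}$ all match the paper's proof. The difficulty is your handling of face~(C), and specifically the resonant situation $\mu_1=\lambda_k^s$.

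Your plan on face~(C) is to use the negative term $-\int_\Omega|w_1|^\alpha|w_2|^\beta\,\dx$ in $h(W)$, scaled like $R^{2_s^*}$, to absorb the bounded positive maximum of $g(t)$. This only works away from the ``degenerate'' directions $w_1=0$ or $w_2=0$; near those directions the coupling integral is arbitrarily small, and the $R_0$ you would need depends on the distance to that subspace, so no uniform $R_0$ exists. On the degenerate subspace itself your fallback scalar argument gives only $h(W)\le CR^2m^{2s-N}$ (a small \emph{positive} quantity), so nothing cancels $g(t^\ast)>0$, and $J$ is bounded above only by a fixed positive constant, not by something tending to $0$. The observation $a,c\in[\mu_1,\mu_2]$ is correct but does not help: when $\mu_1=\lambda_k^s$ one may have $a=\lambda_k^s$ (e.g.\ if $b=0$), and then $\tfrac12\|w_1\|_{\Hspace}^2-\tfrac a2\|w_1\|_{L^2}^2$ has no negative leading part.

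The paper avoids this by not relying on the coupling term to dominate on face~(C). Instead it distinguishes two cases. If $\mu_1>\lambda_k^s$, then Lemma~\ref{lemma3.2} gives $J(W)\le \tfrac{\lambda_k^s-\mu_1}{2(\lambda_k^s+C)}R^2 + CR^2m^{2s-N}$, and the strictly negative $R^2$ term absorbs $\max_t g(t)$. If $\mu_1=\lambda_k^s$, the paper performs a \emph{second} orthogonal splitting $W=c_1(z_1,z_2)+c_2(q\varphi_k^m,d\varphi_k^m)$ with $(z_1,z_2)\in[\mathbb H_{m,k-1}^-]^2$: for $c_1\neq0$ the gap $\lambda_{k-1}^s<\lambda_k^s$ supplies a negative $R^2$ term, and for $c_1=0$ one has $w_1,w_2$ both proportional to $\varphi_k^m$, so the coupling term is $c_2^{2_s^*}q^\alpha d^\beta\int_\Omega|\varphi_k^m|^{2_s^*}\,\dx$, bounded below via a direct lower bound on $\|\varphi_k^m\|_{L^{2_s^*}}$. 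Your proposal is missing precisely this resonant dichotomy and the secondary decomposition.

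A smaller remark: on face~(A) you stop at $J(W)\le CR^2m^{2s-N}$, whereas the paper sharpens this to an $R$-independent bound $\tilde Cm^{-\frac{N}{2s}(N-2s)}$ via the pointwise inequality $|u|^\alpha|v|^\beta-\lambda_m(u^2+v^2)\ge -C_{\alpha,\beta}\lambda_m^{(\alpha+\beta)/(\alpha+\beta-2)}$. Your version is adequate once $R$ is fixed, but the paper's device is what makes the subsequent steps uniform in $R\ge R_0$.
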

\begin{proof}
First we decompose $\partial Q_m^\varepsilon$ as follows: $\partial Q_m^\varepsilon = \Gamma_{1,m} \cup \Gamma_{2,m} \cup \Gamma_{3,m}$, where
\begin{align*}
    \Gamma_{1,m}&= \overline{B}_R \cap [\mathbb{H}_{m,k}^-]^2,\\
    \Gamma_{2,m} &= \left\{(u,v) \in \mathbb{H}: (u,v)= (u_1,v_1) + \varrho {\bf U}_\varepsilon, \text{ with } (u_1,v_1) \in B_R \cap [\mathbb{H}_{m,k}^-]^2 \right\},\\
    \Gamma_{3,m} &= \left\{(u,v) \in \mathbb{H}: (u,v)= (u_1,v_1) + t {\bf U}_\varepsilon, \text{ with } (u_1,v_1) \in [\mathbb{H}_{m,k}^-]^2, \|(u_1,v_1)\|_{\mathbb{H}}=R, 0 \leq t \leq \varrho \right\}.
\end{align*}
\noindent \textbf{Step I: Estimate on $\Gamma_{1,m}$.}  Let $(u,v) \in \Gamma_{1,m}$. Using Lemma \ref{lemma3.2}, \eqref{quadratic-remark}, we obtain the following estimate
\begin{equation}\label{sec4-eq1}
\begin{split}
    J(u,v)&= \frac{1}{2} \|(u,v)\|_{\mathbb{H}}^2-\frac{1}{2}\int_{\Omega} (A{\bf U},{\bf U})_{\R^2}\,\dx - \int_{\Omega}|u|^\alpha |v|^\beta\,\dx \\
    &\leq \frac{1}{2} \left(\lambda_k^s + C m^{2s-N} \right) \int_{\Omega}(u^2+v^2)\,\dx  - \frac{\mu_1}{2} \int_{\Omega}(u^2+v^2)\,\dx - \int_{\Omega}|u|^\alpha |v|^\beta\,\dx \\
    &\leq \frac{1}{2} \left(\lambda_k^s + C m^{2s-N} \right)\int_{\Omega}(u^2+v^2)\,\dx - \frac{\lambda_k^s}{2} \int_{\Omega}(u^2+v^2)\,\dx- \int_{\Omega}|u|^\alpha |v|^\beta\,\dx \\
    &= \frac{1}{2} C m^{2s-N} \int_{\Omega}(u^2+v^2)\,\dx - \int_{\Omega}|u|^\alpha |v|^\beta\,\dx,
\end{split}
\end{equation}
where $C=C(N,s,k)$ depends on $N,s,k$.
Next, we define the function $F_{\theta_m}: \R^2 \rightarrow \R$ given by 
\begin{align*}
    F_{\lambda_m}(u,v) = |u|^\alpha |v|^\beta - \theta_m (u^2 + v^2).
\end{align*}
A pair $(u,v)$ is an extremum point of $F_{\theta_m}$ if
\begin{align}
    \alpha |u|^{\alpha-2}u|v|^\beta - 2 \theta_m u &= 0 \label{4.5}\\
    \beta|u|^\alpha |v|^{\beta-2}v - 2 \theta_m v &=0 \label{4.6}
\end{align}
Now multiplying \eqref{4.5}, \eqref{4.6} with $\beta u$ and $\alpha v$ respectively, and then subtraction yields $|v|= \left(\frac{\beta}{\alpha} \right)^{\frac{1}{2}}|u|$. Further we define
\begin{align}\label{function-f(u)}
    f(u):= F_{\lambda_m}(|u|, \left(\frac{\beta}{\alpha} \right)^{\frac{1}{2}}|u|) = |u|^{\alpha +\beta} \left(\frac{\beta}{\alpha} \right)^{\frac{\beta}{2}} - \theta_m \left(\frac{\alpha +\beta}{\alpha} \right)|u|^2.
\end{align}
Then $f$ has a global minimum at $u_0 = \left(\frac{2 \theta_m}{\alpha} \left(\frac{\alpha}{\beta} \right)^{\frac{\beta}{2}} \right)^{\frac{1}{\alpha+\beta-2}}$, which further implies
\begin{align}\label{inequality-1}
    f(u) \geq f(u_0) 
    &= - C_{\alpha,\beta} \theta_m^{\frac{\alpha+\beta}{\alpha+\beta-2}},
\end{align}
where 
\[C_{\alpha,\beta}:= \left(\frac{\alpha+\beta-2}{\alpha} \right) \left(\frac{\alpha}{\beta} \right)^{\frac{\beta}{\alpha+\beta-2}} \left(\frac{2}{\alpha} \right)^{\frac{2}{\alpha+\beta-2}}.\]
From \eqref{sec4-eq1},\eqref{function-f(u)},\eqref{inequality-1} and setting $\theta_m = \frac{1}{2} C m^{2s-N}$, we deduce that for constant $\tilde{C}>0$,
\begin{align}\label{stepI-estimate}
  J(u,v) &\leq -  \int_{\Omega} \left( |u|^\alpha |v|^\beta - \theta_m (u^2+v^2) \right)\,\dx \leq C_{\alpha,\beta} \theta_m^{\frac{\alpha+\beta}{\alpha+\beta-2}} \leq \tilde{C} m^{- \frac{N}{2s}(N-2s)}.
\end{align}
\noindent \textbf{Step II: Estimate on $\Gamma_{2,m}$.} Let $(u,v) \in \Gamma_{2,m}$. Since \( {\bf U}_\varepsilon = (\mathcal{A} u_\varepsilon^m, \mathcal{B}u_\varepsilon^m) \), where $u_\varepsilon^m =\eta_m u_\varepsilon$, we write
\begin{align*}
    (u,v) = (u_1,v_1) + \varrho (\mathcal{A} u_\varepsilon^m, \mathcal{B} u_\varepsilon^m).
\end{align*}
Notice that $\text{supp}(u_1) \cap \text{supp}(u_\varepsilon^m)=\emptyset$ and $\text{supp}(v_1) \cap \text{supp}(u_\varepsilon^m)=\emptyset$. We can write $J(u,v)$ on $\Gamma_{2,m}$ as follows
\begin{equation}\label{functional-g1}
\begin{split}
    J(u,v) =& J(u_1,v_1) + { \left( J(\varrho \mathcal{A} u_\varepsilon^m,\varrho \mathcal{B} u_\varepsilon^m)) +2\varrho \mathcal{A} \langle u_1, u_\epsilon^m \rangle_{\Hspace} + 2\varrho \mathcal{B} \langle v_1, u_\epsilon^m \rangle_{\Hspace}\right)}\\
    :=&I_{m,1}+ {I_{m,2}}.
    \end{split}
\end{equation}
Following Step I, we get
\begin{align}\label{estimate-Im1}
  I_{m,1} \leq C m^{- \frac{N}{2s}(N-2s)},
\end{align}
for a constant $C>0$. On the other hand, since
\begin{align*}
    I_{m,2} \leq& \frac{1}{2} \|(\varrho \mathcal{A} u_\varepsilon^m,\varrho \mathcal{B} u_\varepsilon^m)\|_{\mathbb{H}}^2 - \varrho^{2_s^*} \mathcal{A}^\alpha \mathcal{B}^\beta \int_{\Omega} |u_\varepsilon^m|^{2_s^*}\,\dx+ { 2 \varrho \left( \mathcal{A} \|u_1\|_{\Hspace} + \mathcal{B} \|v_1\|_{\Hspace}\right) \|  u_\varepsilon^m\|_{\Hspace} }\\
    =& \frac{\varrho^2}{2} (\mathcal{A}^2 + \mathcal{B}^2) \| u_\varepsilon^m\|_{\Hspace}^2 - \varrho^{2_s^*} \mathcal{A}^\alpha \mathcal{B}^\beta \int_{\Omega} |u_\varepsilon^m|^{2_s^*}\,\dx + { 2 \varrho \left( \mathcal{A} \|u_1\|_{\Hspace} + \mathcal{B} \|v_1\|_{\Hspace}\right) \|  u_\varepsilon^m\|_{\Hspace} }.
\end{align*}
{and $2<2_s^*$, there exist $\varrho_0>0$ large enough such that $I_{m,2} \leq 0$ for $\varrho \geq \varrho_0$.}
{Therefore, from \eqref{estimate-Im1}, we conclude $J(u,v) \leq B(m)$ on $\Gamma_{2,m}$ for $\varrho \geq \varrho_0$ with $B(m)$ such that $B(m) \rightarrow 0$ as $m \rightarrow +\infty$.}

\noindent \textbf{Step III: Estimate on $\Gamma_{3,m}$.}
For $(u,v) \in \Gamma_{3,m}$ and \( {\bf U}_\varepsilon = (\mathcal{A} u_\varepsilon^m, \mathcal{B}u_\varepsilon^m) \), we can write 
\[(u,v) = (u_1,v_1) + t (\mathcal{A} u_\varepsilon^m, \mathcal{B} u_\varepsilon^m),\]
where $t\in [0,\varrho]$ with $\varrho \geq \varrho_0$. Also, we have $(u_1,v_1) \in [\mathbb{H}_{m,k}^-]^2$ and $\|(u_1,v_1)\|_{\mathbb{H}}=R$. Notice that
\[J(u,v) = J(u_1,v_1) + J(t \mathcal{A} u_\varepsilon^m,t \mathcal{B}u_\varepsilon^m) + I_{m,4},\]
where
\[I_{m,4}= {2t \mathcal{A} \langle u_1, u_\epsilon^m \rangle_{\Hspace} + 2t \mathcal{B} \langle v_1, u_\epsilon^m \rangle_{\Hspace}}.
\]
We divide this step into two parts:

\noindent \textbf{(a) If \boldsymbol{$\lambda_k^s < \mu_1 \leq \mu_2 < \lambda_{k+1}^s$}:} In this case, using \eqref{quadratic-remark} and Lemma \ref{lemma3.2}, we have
\begin{equation}\label{5.13}
\begin{split}
    J(u_1,v_1)&\leq \frac{1}{2} \|(u_1,v_1)\|_{\mathbb{H}}^2-\frac{1}{2}\int_{\Omega} (A(u_1,v_1),(u_1,v_1))_{\R^2}\,\dx \\
    &\leq \frac{1}{2} \|(u_1,v_1)\|_{\mathbb{H}}^2 - \frac{\mu_1}{2} \left( \|u_1\|_{L^2(\Omega)}^2 + \|v_1\|_{L^2(\Omega)}^2 \right)  \\
    & \leq \frac{1}{2} \|(u_1,v_1)\|_{\mathbb{H}}^2 - \frac{\mu_1}{2} \left( \frac{\|(u_1,v_1)\|_{\mathbb{H}}^2}{\lambda_k^s + C(N,s,k)m^{2s-N}} \right) \\
    & \leq  \frac{(\lambda_k^s-\mu_1) \|(u_1,v_1)\|_{\mathbb{H}}^2}{2(\lambda_k^s + C(N,s,k)m^{2s-N})}  + \frac{C(N,s,k)m^{2s-N}}{2(\lambda_k^s + C(N,s,k)m^{2s-N})} \|(u_1,v_1)\|_{\mathbb{H}}^2\\
    &\leq  \frac{(\lambda_k^s-\mu_1)}{2(\lambda_k^s + C(N,s,k))} R^2  + \frac{C(N,s,k)m^{2s-N}}{2 \lambda_k^s } R^2.
\end{split}
\end{equation}
{Next,}
\begin{equation} \label{5.14}
\begin{split}
    J(t \mathcal{A}u_\varepsilon^m,t \mathcal{B}u_\varepsilon^m) \leq& \frac{1}{2} \|(t \mathcal{A}u_\varepsilon^m,t \mathcal{B}u_\varepsilon^m)\|_{\mathbb{H}}^2 - t^{2_s^*} \mathcal{A}^\alpha \mathcal{B}^\beta \int_{\Omega} |u_\varepsilon^m|^{2_s^*}\,\dx \\
    =& \frac{t^2}{2} (\mathcal{A}^2 + \mathcal{B}^2) \| u_\varepsilon^m\|_{\Hspace}^2 - t^{2_s^*} \mathcal{A}^\alpha \mathcal{B}^\beta \int_{\Omega} |u_\varepsilon^m|^{2_s^*}\,\dx. \\
\end{split}
\end{equation}
{Thus, from \eqref{5.13} and \eqref{5.14},} we obtain
\begin{align*}
    J(u,v) \leq& \frac{(\lambda_k^s-\mu_1)}{2(\lambda_k^s + C(N,s,k))} R^2   + \frac{C(N,s,k)m^{2s-N}}{2 \lambda_k^s } R^2 + \frac{t^2}{2} (\mathcal{A}^2 + \mathcal{B}^2) {\| u_\varepsilon^m\|_{\Hspace}^2} - t^{2_s^*} \mathcal{A}^\alpha \mathcal{B}^\beta {\int_{\Omega} |u_\varepsilon^m|^{2_s^*}\,\dx}\\
    & + { 2 t \left( \mathcal{A} \|u_1\|_{\Hspace} + \mathcal{B} \|v_1\|_{\Hspace}\right) \| u_\varepsilon^m\|_{\Hspace} }.
\end{align*}
Since $\lambda_k^s -\mu_1<0$, we can choose $R_1$ large enough such that for every $R \geq R_1$, we have
\begin{align*}
    \frac{(\lambda_k^s-\mu_1)}{2(\lambda_k^s + C(N,s,k))} R^2 + \frac{t^2}{2} (\mathcal{A}^2 + \mathcal{B}^2) r_{\varepsilon,m}^{(1)}  - t^{2_s^*} \mathcal{A}^\alpha \mathcal{B}^\beta r_{\varepsilon,m}^{(2)} + {2 t \left( \mathcal{A} \|u_1\|_{\Hspace} + \mathcal{B} \|v_1\|_{\Hspace}\right) r_{\varepsilon,m}^{(3)}} <0,
\end{align*}
where {$r_{\varepsilon,m}^{(1)}:=\| u_\varepsilon^m\|_{\Hspace}^2$, $r_{\varepsilon,m}^{(2)}=\|u_\varepsilon^m\|_{L^{2_s^*}(\Omega)}^{2_s^*}$ and $r_{\varepsilon,m}^{(3)}=\| u_\varepsilon^m\|_{\Hspace}$}. Hence, we conclude that {$J(u,v) \leq C(m)$ on $\Gamma_{3,m}$ for any $R\geq R_1$ with $C(m)$ such that $C(m) \rightarrow 0$ as $m \rightarrow +\infty$.}

\noindent \textbf{(b) If \boldsymbol{$\lambda_k^s = \mu_1 \leq \mu_2 < \lambda_{k+1}^s$}:} Let $(u_1,v_1) = R (\Bar{u}_1, \Bar{v}_1)$ with $\|(\Bar{u}_1, \Bar{v}_1)\|_{\mathbb{H}}=1$. For $q,d,c_1,c_2 \in R$, we set
\begin{align*}
    (\Bar{u}_1, \Bar{v}_1) = c_1(z_1,z_2) + c_2 (q\varphi_k^m, d\varphi_k^m),
\end{align*}
where $(q\varphi_k^m, d\varphi_k^m) \in \mathcal{N}:=\text{span}\left\{(\varphi_k^m, 0), (0, \varphi_k^m) \right\}$ and $(z_1,z_2) \in [\mathbb{H}_{m,k}^-]^2 \cap \mathcal{N}^\perp$. Notice that
\begin{equation}\label{5.15}
\begin{split}
    J (u_1, v_1) &\leq \frac{R^2}{2} \| (\Bar{u}_1, \Bar{v}_1) \|_{\mathbb{H}}^2 - \frac{1}{2} \int_{\Omega} ( A(R \Bar{u}_1, R \Bar{v}_1) , (R \Bar{u}_1, R \Bar{v}_1) )_{\R^2} \, \dx\\
    &\leq \frac{R^2}{2} \| (\Bar{u}_1, \Bar{v}_1) \|_{\mathbb{H}}^2 - R^2 \frac{\mu_1}{2} \int_{\Omega} \left( \Bar{u}_1^2 + \Bar{v}_1^2 \right) \,\dx.
    \end{split}
\end{equation}
Now observe that
\begin{equation}\label{n1}
\begin{split}
    \| (\Bar{u}_1, \Bar{v}_1) \|_{\mathbb{H}}^2 =& \| \Bar{u}_1 \|_{\Hspace}^2 + \| \Bar{v}_1 \|_{\Hspace}^2 \\
    =& \| c_1 z_1 + c_2 q\varphi_k^m \|_{\Hspace}^2 + \| c_1 z_2 + c_2 d \varphi_k^m \|_{\Hspace}^2 \\
    =& \| c_1 z_1 \|_{\Hspace}^2 + \| c_2 q\varphi_k^m \|_{\Hspace}^2 + \| c_1 z_2 \|_{\Hspace}^2 + \| c_2 d \varphi_k^m \|_{\Hspace}^2 \\
    &+ 2 \langle c_1 z_1, c_2q \varphi_k^m \rangle_{\Hspace} + 2 \langle c_1 z_2, c_2 d \varphi_k^m \rangle_{\Hspace} \\
    =& \| (c_1 z_1, c_1 z_2) \|_{\mathbb{H}}^2 + \| (c_2 q\varphi_k^m, c_2 d \varphi_k^m) \|_{\mathbb{H}}^2 + 2 c_1 c_2q \langle z_1, \varphi_k^m \rangle_{\Hspace} + 2 c_1 c_2 d \langle z_2, \varphi_k^m \rangle_{\Hspace}
\end{split}
\end{equation}
From \eqref{5.15} and \eqref{n1}, we obtain that
\begin{align*}
  J (u_1, v_1) \leq& \frac{R^2}{2} \left( \| (c_1 z_1, c_1 z_2) \|_{\mathbb{H}}^2 + \| (c_2 q\varphi_k^m, c_2 d \varphi_k^m) \|_{\mathbb{H}}^2  + 2 c_1 c_2q \langle z_1, \varphi_k^m \rangle_{\Hspace} + 2 c_1 c_2 d \langle z_2, \varphi_k^m \rangle_{\Hspace} \right)\\
  & - \frac{R^2 \mu_1}{2}  \left( \| c_1 z_1 \|_{L^2(\Omega)}^2 + \| c_1 z_2 \|_{L^2(\Omega)}^2 + \| c_2q \varphi_k^m \|_{L^2(\Omega)}^2 + \| c_2 d \varphi_k^m \|_{L^2(\Omega)}^2 \right) \\
  & - R^2 \mu_1 \left(  c_1 c_2q \int_{\Omega} z_1 \varphi_k^m \, \dx +  c_1 c_2 d \int_{\Omega} z_2 \varphi_k^m \, \dx \right).
\end{align*}
Since \(\mu_1 = \lambda_k^s\), we write
\begin{align*}
    J (u_1, v_1) \leq& \frac{R^2}{2} \left( c_1^2 \| (z_1, z_2) \|_{\mathbb{H}}^2 + c_2^2 \| (q\varphi_k^m, d \varphi_k^m) \|_{\mathbb{H}}^2 \right)\\
    & - \frac{R^{2}}{2} \lambda_k^s \left( c_1^2 \left( \| z_1 \|_{L^2(\Omega)}^2 + \| z_2 \|_{L^2(\Omega)}^2 \right) + c_2^2 \left( \| q\varphi_k^m \|_{L^2(\Omega)}^2 + \| d \varphi_k^m \|_{L^2(\Omega)}^2  \right)\right) + R^2 \mathcal{T}_m\\
    \leq&  \frac{R^2}{2} c_1^2 \left( \lambda_{k-1}^s + C_{k-1} m^{2s-N} \right) \left( \| z_1 \|_{L^2(\Omega)}^2 + \| z_2 \|_{L^2(\Omega)}^2 \right) + \frac{R^2}{2} c_2^2 \left( \lambda_k^s + C_k m^{2s-N} \right) (q^2 + d^2) \| \varphi_k^m \|_{L^2(\Omega)}^2\\
    & - \frac{R^{2}}{2} \lambda_k^s \left( c_1^2 \big( \| z_1 \|_{L^2(\Omega)}^2 + \| z_2 \|_{L^2(\Omega)}^2 \big) + c_2^2 (q^2 + d^2) \| \varphi_k^m \|_{L^2(\Omega)}^2 \right) + R^2 \mathcal{T}_m,
\end{align*}
where 
\begin{align*}
    \mathcal{T}_m =   c_1 c_2q \langle z_1, \varphi_k^m \rangle_{\Hspace} + d c_1 c_2 \langle z_2, \varphi_k^m \rangle_{\Hspace} - \mu_1 qc_1 c_2 \int_{\Omega} z_1 \varphi_k^m \, \dx - \mu_1 d c_1 c_2  \int_{\Omega} z_2 \varphi_k^m \, \dx.
\end{align*}
Notice that \( \mathcal{T}_m \to 0 \) as \( m \to +\infty \) using Lemma \ref{orthogonality of approximate eigenfunctions} since $(z_1,z_2) \in [\mathbb{H}_{m,k}^-]^2 \cap \mathcal{N}^\perp$.  Thus, we obtain
\begin{align*}
    J (u_1, v_1) \leq \frac{R^2}{2} (\lambda_{k-1}^s - \lambda_k^s) c_1^2 \left( \| z_1 \|_{L^2(\Omega)}^2 + \| z_2 \|_{L^2(\Omega)}^2 \right) + R^2 \widetilde{C}_k m^{2s-N} + R^2 \mathcal{T}_m.
\end{align*}
Finally, we obtain
\begin{align*}
    J (u, v) \leq& J (u_1, v_1) + J (t \mathcal{A} u_\varepsilon^m, t \mathcal{B} u_\varepsilon^m) + I_{m,4}\\
     \leq& \frac{R^2}{2} (\lambda_{k-1}^s - \lambda_k^s) c_1^2 \left( \| z_1 \|_{L^2(\Omega)}^2 + \| z_2 \|_{L^2(\Omega)}^2 \right) + R^2 \widetilde{C}_k m^{2s-N} + R^2 \mathcal{T}_m\\
    & + \frac{t^2}{2} (\mathcal{A}^2 + \mathcal{B}^2) {\| u_\varepsilon^m\|_{\Hspace}^2} - \frac{t^{2_s^*}}{2} \mathcal{A}^\alpha \mathcal{B}^\beta {\|  u_\varepsilon^m \|_{L^{2_s^*}(\Omega)}^{2_s^*}+ I_{m,4}}.
\end{align*}
Since $\lambda_{k-1}^s - \lambda_k^s<0$,  if \( c_1 \neq 0 \), then there exists \( R_2 \) large enough such that for all $R> R_2$ we can get
\begin{align*}
    \frac{R^2}{2} (\lambda_{k-1}^s - \lambda_k^s) c_1^2 \left( \| z_1 \|_{L^2(\Omega)}^2 + \| z_2 \|_{L^2(\Omega)}^2 \right) + \frac{t^2}{2} (\mathcal{A}^2 + \mathcal{B}^2) {\| u_\varepsilon^m\|_{\Hspace}^2} + {I_{m,4}} <0.
\end{align*}
Hence, it follows that {$J (u, v) \leq D(m)$ for any $R\geq R_2$ with $D(m) \to 0$ as $m \to +\infty$}.

If \( c_1 = 0 \), then \( (\Bar{u}_1, \Bar{v}_1) = (c_2 q\varphi_k^m, c_2 d \varphi_k^m) \) and
\begin{equation}\label{5.17}
\begin{split}
    J (u_1, v_1) \leq& \frac{R^2}{2} \| (c_2q \varphi_k^m, c_2 d \varphi_k^m) \|_{\mathbb{H}}^2 - \frac{R^{2} \lambda_k^s}{2} \left(  \| c_2 q\varphi_k^m\|_{L^2(\Omega)}^2 + \|c_2 d \varphi_k^m \|_{L^2(\Omega)}^2 \right)\\ 
    &- R^{2_s^*}c_2^{2_s^*}q^\alpha d^\beta \int_{\Omega} |\varphi_k^m|^{2_s^*}\,\dx \\
    \leq& \frac{R^2 c_2^2}{2} (q^2 + d^2) \left( \lambda_k^s + C_k m^{2s-N} \right) \| \varphi_k^m \|_{L^2(\Omega)}^2 - \frac{R^2 c_2^2}{2}(q^2 + d^2) \lambda_k^s \| \varphi_k^m \|_{L^2(\Omega)}^2 \\
    & - R^{2_s^*}c_2^{2_s^*}q^\alpha d^\beta \int_{\Omega} |\varphi_k^m|^{2_s^*}\,\dx.
\end{split}
\end{equation}
Now we claim that there exists \( \Theta_k > 0 \) such that
\begin{align}\label{claim}
    \int_{\Omega} |\varphi_k^m|^{2_s^*}\, \dx \geq \Theta_k > 0.
\end{align}
By Hölder inequality we have $\| \varphi_k^m \|_{L^2(\Omega)} \leq C_1 \| \varphi_k^m \|_{L^{2_s^*} (\Omega)}$, where $C_1 = |\Omega|^{\frac{1}{2} - \frac{1}{2_s^*}}$. Using \eqref{test function em}, we can write
\[
\int_{\Omega} \left(\varphi_k^m \right)^2\, \dx \geq \int_{\Omega \setminus B_{2/m}} \varphi_k^2\, \dx.
\]
Since $\|\varphi_k\|_{L^2(\Omega)}=1$, we can write
\begin{align*}
    \int_{\Omega \setminus B_{2/m} } \left(\varphi_k\right)^2\, \dx = 1 - \int_{\Omega \cap B_{2/m} } \left(\varphi_k \right)^2\, \dx.
\end{align*}
We can find a $m=m_1>0$ such that $\int_{\Omega \cap B_{2/m_1} } \left(\varphi_k \right)^2\, \dx \leq c_4$, where $c_4 \in (0,1)$. This yields
\begin{align*}
    \int_{\Omega \setminus B_{2/m_1} } \left(\varphi_k\right)^2\, \dx \geq  1 - c_4,
\end{align*}
which further implies $\int_{\Omega} \left(\varphi_k^m\right)^2 dx \geq (1 - c_4)$. Hence, we obtain that $\| \varphi_k^m \|_{L^{2_s^*} (\Omega)} \geq \frac{\sqrt{1 - c_4}}{C_1} > 0$. We can choose $\Theta_k = \left(\frac{\sqrt{1 - c_4}}{C_1} \right)^{2_s^*}$, which proves the claim \eqref{claim}. From \eqref{5.17}, there exists a number \( R_3 > 0 \) such that for all \( R > R_3 \), we have
\begin{align*}
    J (u, v) =& J (u_1, v_1) + J ({t \mathcal{A}u_\varepsilon^m, t \mathcal{B} u_\varepsilon^m}) + I_{m,4} \\
    \leq& R^2 \widetilde{C}_k m^{2s-N} \| \varphi_k^m \|_{L^2(\Omega)}^2 - C R^{2_s^*} \Theta_k + \frac{t^2}{2} (\mathcal{A}^2 + \mathcal{B}^2) { \| u_\varepsilon^m \|_{\Hspace}^2}- \frac{t^{2_s^*}}{2} \mathcal{A}^\alpha \mathcal{B}^\beta {\| u_{\varepsilon}^m \|_{L^{2_s^*}(\Omega}^{2_s^*}} + I_{m,4}.
\end{align*}
which completes the proof. To conclude the Step III, we choose $R_0> \max\{R_1,R_2,R_3\}$.
\end{proof}

\section{Main result}\label{Main_Result}
\begin{lemma}\label{energy-threshold-estimate}
    Let {$N > 8s$} and $\lambda_k^s \leq \mu_1 \leq \mu_2<\lambda_{k+1}^s$. Then
    \begin{align}\label{Threshold}
    \max\limits_{(u,v) \in Q_m^\varepsilon} J(u,v) < \frac{s}{N} (2_s^*)^{-\frac{N-2s}{2s}}\left(\widetilde{S}_{(\alpha,\beta)}(\Omega) \right)^{\frac{N}{2s}}.
\end{align}
\end{lemma}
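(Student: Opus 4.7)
The plan is to decompose any $(u,v)\in Q_m^\varepsilon$ as $(u,v) = (u_1,v_1) + t(\mathcal{A}u_\varepsilon^m, \mathcal{B}u_\varepsilon^m)$ with $(u_1,v_1) \in \overline{B}_R \cap [\mathbb{H}_{m,k}^-]^2$ and $t \in [0,\varrho]$, then exploit the disjointness of supports engineered into the test functions: every $\varphi_i^m = \mathcal{E}_m\varphi_i$ vanishes on $B_{1/m}$, while $u_\varepsilon^m = \phi_{2/m}u_\varepsilon$ is supported in $B_{1/m}$. Consequently, the quadratic potential and the $L^{\alpha+\beta}$ coupling split exactly, leaving only nonlocal cross terms in the $\Hspace$-inner product, so that
\[
J(u,v) = J(u_1,v_1) + J\!\left(t\mathcal{A}u_\varepsilon^m, t\mathcal{B}u_\varepsilon^m\right) + \mathcal{R}_m(u_1,v_1;t),
\]
where $\mathcal{R}_m := t\mathcal{A}\langle u_1, u_\varepsilon^m\rangle_{\Hspace} + t\mathcal{B}\langle v_1, u_\varepsilon^m\rangle_{\Hspace}$.

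For the low-frequency piece, the argument in Step~I of Lemma \ref{geometry-2} applies verbatim: Lemma \ref{lemma3.2} combined with $\mu_1\geq\lambda_k^s$ and \eqref{quadratic-remark} reduces the quadratic part of $J(u_1,v_1)$ to $Cm^{2s-N}(\|u_1\|_{L^2}^2+\|v_1\|_{L^2}^2)$, and the $F_{\lambda_m}$ minimization with $\lambda_m=\tfrac{C}{2}m^{2s-N}$ then yields $J(u_1,v_1) \leq \tilde{C}\,m^{-N(N-2s)/(2s)}$. For the peak piece I would write $J(t\mathcal{A}u_\varepsilon^m, t\mathcal{B}u_\varepsilon^m) = \tfrac{t^2}{2}K_1 - t^{2_s^*}K_2$ and use \eqref{quadratic-remark} to bound $K_1 \leq (\mathcal{A}^2+\mathcal{B}^2)(\|u_\varepsilon^m\|_{\Hspace}^2 - \mu_1\|u_\varepsilon^m\|_{L^2}^2)$, together with Lemma \ref{lemma-Ortega2019}, \eqref{A-eq7} and hypothesis \eqref{non_att_hyp} for the numerator, and with the quantitative lower bound $\|u_\varepsilon^m\|_{L^2}^2 \geq c_N\,\varepsilon^{2s}$ (obtained by a direct change of variables in \eqref{aubin} and valid for $\varepsilon m$ small since $N\geq 5s > 4s$). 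Maximizing $\tfrac{t^2}{2}K_1 - t^{2_s^*}K_2$ over $[0,\varrho]$ (with $\varrho$ large enough that the unconstrained maximizer $t_\star = (K_1/(2_s^*K_2))^{1/(2_s^*-2)}$ lies interior) and fixing $\mathcal{A}/\mathcal{B}=\sqrt{\alpha/\beta}$ so Theorem \ref{important relation} applies, a direct computation would give
\[
\sup_{t\in[0,\varrho]} J\!\left(t\mathcal{A}u_\varepsilon^m, t\mathcal{B}u_\varepsilon^m\right) \leq \frac{s}{N}\,(2_s^*)^{-(N-2s)/(2s)}\bigl(\widetilde{S}_{(\alpha,\beta)}(\Omega) - \widetilde{c}\mu_1\varepsilon^{2s} + O((\varepsilon m)^{N-2s})\bigr)^{N/(2s)}.
\]

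The cross terms $\mathcal{R}_m$ would be treated as for $I_{m,3}$ in Step~II of Lemma \ref{geometry-2}: expanding $u_1 = \sum_{i=1}^k\alpha_i\varphi_i^m$, the concentration estimate $\int_{B_{1/m}}u_\varepsilon\lesssim \varepsilon^{(N-2s)/2}m^{-2s}$ coming from \eqref{aubin}, the finite dimensionality of $\mathbb{H}_{m,k}^-$, and the convergence $\varphi_i^m\to\varphi_i$ in $\Hspace$ from Lemma \ref{lemma3.2} together force $\mathcal{R}_m\to 0$ uniformly over $(u_1,v_1)\in\overline{B}_R\cap[\mathbb{H}_{m,k}^-]^2$ and $t\in[0,\varrho]$ as $m\to\infty$. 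To close the argument I would couple $\varepsilon$ to $m$ via $\varepsilon = m^{-\theta}$ for some $\theta$ in the open interval determined by the joint conditions $(\varepsilon m)^{N-2s}=o(\varepsilon^{2s})$, $m^{-N(N-2s)/(2s)}=o(\varepsilon^{2s})$ and $\mathcal{R}_m = o(\varepsilon^{2s})$, which is simultaneously satisfiable thanks to $N\geq 5s$. All error contributions are then strictly dominated by the negative gain $-\widetilde{c}\mu_1\varepsilon^{2s}$ (strict thanks to $\mu_1\geq\lambda_k^s>0$), and the strict inequality \eqref{Threshold} follows upon raising to the power $N/(2s)$ and taking $m$ large. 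The main obstacle will be the uniform, quantitative control of the nonlocal remainder $\mathcal{R}_m$ at a rate fast enough to be absorbed by $\mu_1\varepsilon^{2s}$, and this balancing of several competing error scales against $\varepsilon^{2s}$ is precisely what forces the dimensional restriction $N\geq 5s$.
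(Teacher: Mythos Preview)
Your approach is essentially the paper's: the same splitting of $J$ via disjoint supports, the same bound $J(u_1,v_1)\le \tilde C\,m^{-N(N-2s)/(2s)}$ from Step~I of Lemma~\ref{geometry-2}, the same use of the $L^2$-mass of $u_\varepsilon^m$ to create the strict negative gain, and the same coupling $\varepsilon=m^{-\theta}$ to balance all errors against that gain. The paper fixes the exponent explicitly as $\theta=(N+2s)/(2s)$ and writes the $L^2$ lower bound in the adapted form $\|u_\varepsilon^m\|_2^2\ge \bar C\,\varepsilon^{N-2s}m^{(l+1)(N-4s)}$ with $l=N/(2s)$, which under that specific coupling collapses to your cleaner $c_N\varepsilon^{2s}$; it then verifies the two exponent inequalities $(l+1)(N-4s)>N-2s$ and $(l+1)(N-4s)>\tfrac{(N+2s)(N-2s)}{4s}-2s$ directly, whereas you assert (correctly, but without checking) that an admissible $\theta$ exists for $N\ge 5s$.

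The one place where you are vaguer than the paper is the cross term $\mathcal{R}_m$. You invoke the argument for $I_{m,3}$ in Lemma~\ref{geometry-2}, which is only qualitative ($\to 0$), and then jump to requiring $\mathcal{R}_m=o(\varepsilon^{2s})$. The paper instead records the quantitative rate $|\mathcal{R}_m|\le C\,\varepsilon^{(N-2s)/2}m^{-2s}$ directly from the $L^1$ concentration estimate you already cite, and this rate---not mere convergence---is what feeds into the final exponent balance. You have the right ingredient in hand; you just need to use it quantitatively rather than routing through the qualitative $I_{m,3}$ argument.
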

\begin{proof}
    For a given $(u,v) \in Q_m^\varepsilon$ and \( {\bf U}_\varepsilon = (\mathcal{A} u_\varepsilon^m, \mathcal{B}u_\varepsilon^m) \), we write 
    \[(u,v) = (u_1,v_1) + t (\mathcal{A} u_\varepsilon^m, \mathcal{B} u_\varepsilon^m),\] where $(u_1,v_1) \in  \overline{B}_R \cap [\mathbb{H}_{m,k}^-]^2$ and {$t\in [0, \varrho]$}. We recall that
\begin{align}\label{5.1}
    J(u,v) = J(u_1,v_1) + J(t \mathcal{A} u_\varepsilon^m,t \mathcal{B}u_\varepsilon^m) + I_{m,5},
\end{align}
where
\begin{equation}\label{Im5}
    I_{m,5} = {2t \mathcal{A} \langle u_1, u_\epsilon^m \rangle_{\Hspace} + 2t \mathcal{B} \langle v_1, u_\epsilon^m \rangle_{\Hspace}}.
\end{equation}
{In the following the same constant $C_i, i\in \mathbb{N},$ might be different at each step.
Let us evaluate the term
\begin{align}\label{IP-1}
    \langle u_1, u_\epsilon^m \rangle_{\Hspace} =\sum_{i=1}^{k}\alpha_i \langle \varphi_i^m, u_\epsilon^m \rangle_{\Hspace} 
\end{align}
Now notice that
\begin{align}\label{Inq-3}
    \langle \varphi_i^m, u_\epsilon^m \rangle_{\Hspace} &= \langle \varphi_i^m-\varphi_i, u_\epsilon^m \rangle_{\Hspace} + \langle \varphi_i, u_\epsilon^m \rangle_{\Hspace} \notag \\
    &=\langle \varphi_i^m-\varphi_i, u_\epsilon^m \rangle_{\Hspace} + \sum_{j=1}^{\infty} \lambda_j^s \langle \varphi_i, \varphi_j \rangle_2 \langle  u_\epsilon^m, \varphi_j \rangle_2 \notag\\
    &\leq C_1 m^{\frac{2s-N}{2}} \left(\frac{1}{2}\|U_\varepsilon\|_{\Xspace} + C_2 \varepsilon^\frac{N-2s}{2}m^\frac{N-2s}{2}\right) + \lambda_i^s  \langle  u_\epsilon^m, \varphi_i\rangle_2  \notag\\
    &\leq C_1 m^{\frac{2s-N}{2}}\|U_\varepsilon\|_{\Xspace} + C_2 \varepsilon^\frac{N-2s}{2}+ C_3 \varepsilon^\frac{N-2s}{2} m^{-2s}.
\end{align}
The last inequality follows using the following estimate
\begin{align*}
    \left|\langle u_\varepsilon^m,\varphi_j \rangle_2 \right| \leq \int_{\Omega} | \eta_m(x) u_\varepsilon(x) \varphi_i(x)|\,\dx \leq \|\varphi_i\|_\infty \int_{\Omega \cap B_{\frac{1}{m}}} | u_\varepsilon(x)|\,\dx \leq C \varepsilon^{\frac{N-2s}{2}} m^{-2s}.
\end{align*}
Since $t\in[0,\varrho]$ for some $\varrho>0$, from \eqref{IP-1} and \eqref{Inq-3}, we deduce that
\begin{align}\label{est-5.7}
     I_{m,5} \leq \varrho (\mathcal{A} + \mathcal{B}) \left( C_4 m^{\frac{2s-N}{2}}\|U_\varepsilon\|_{\Xspace} + C_5 \varepsilon^\frac{N-2s}{2}+ C_6 \varepsilon^\frac{N-2s}{2} m^{-2s} \right).
\end{align}
}
From \eqref{stepI-estimate} in Step I, we get
\begin{align}\label{5.5} 
    J(u_1,v_1) \leq \tilde{C} m^{- \frac{N}{2s}(N-2s)}.
\end{align}
Next, {since $\frac{1}{2m}> \left(\frac{1}{m} \right)^{(l+1)}$for $l=\frac{1}{2}$ and $m>4$}, we have the following $L^2$-estimate,
\begin{equation*}
\begin{split}
    \|u_\varepsilon^m\|_2^2 &= \int_{\Omega}|\eta_m u_\varepsilon|^2\,\dx\geq C\int_{|x|< \left(\frac{1}{m} \right)^{(l+1)}} \frac{\varepsilon^{N-2s}}{\left(\varepsilon^2 + |x|^2 \right)^{N-2s}}\,\dx\geq C\frac{\varepsilon^{N-2s}}{\left(\varepsilon^2 + \left(\frac{1}{m} \right)^{2(l+1)} \right)^{N-2s}} \int_{|x|< \left(\frac{1}{m} \right)^{(l+1)}} \dx\\
    &= C \frac{\varepsilon^{N-2s}}{\left(\varepsilon^2 + \left(\frac{1}{m} \right)^{2(l+1)} \right)^{N-2s}}  \left(\frac{1}{m} \right)^{N(l+1)},
\end{split}
\end{equation*}
for some constant $C>0$. {Thus, letting $\varepsilon = \left(\frac{1}{m} \right)^{l+1}$ with $l=\frac{1}{2}$, we get
\begin{equation}\label{est1}
    \|u_\varepsilon^m\|_2^2 \geq C \frac{m^{-(l+1)(N-2s)}}{\left(\left(\frac{1}{m} \right)^{2(l+1)} + \left(\frac{1}{m} \right)^{2(l+1)} \right)^{N-2s}}  \left(\frac{1}{m} \right)^{N(l+1)} \geq \overline{C} m^{-3s}.
\end{equation}}
{To continue, let us define $g(t):=J(t \mathcal{A} u_\varepsilon^m,t \mathcal{B}u_\varepsilon^m)$. Then, {taking $\varepsilon = \left(\frac{1}{m} \right)^{l+1}$ with $l=\frac{1}{2}$, we get }
\begin{align*}
   g(t)
     \leq& \frac{t^2}{2}\left(\mathcal{A}^2 + \mathcal{B}^2 \right) \left(\|u_\varepsilon^m\|_{\Hspace}^2 -\mu_1 \int_{\Omega} |u_\varepsilon^m|^2\,\dx  \right) - t^{\alpha+\beta} \mathcal{A}^\alpha \mathcal{B}^{\beta} \int_{\Omega} |u_\varepsilon^m|^{\alpha +\beta}\,\dx\\
    \leq& \frac{t^2}{2}\left(\mathcal{A}^2 + \mathcal{B}^2 \right) \left(\frac{1}{2}\|U_\varepsilon\|_{\Xspace}^2 + C_2 \varepsilon^{N-2s}m^{N-2s}-C_3 \varepsilon^{N-2s}m^{(l+1)(N-4s)}  \right)\\
    & - t^{\alpha+\beta} \mathcal{A}^\alpha \mathcal{B}^{\beta} \left( \frac{1}{2}\|u_\varepsilon\|_{L^{2_s^*}(\R^N)}^{2_s^*}- C_4 \varepsilon^N m^N \right)\\
   =&{ \frac{t^2}{2}\left(\mathcal{A}^2 + \mathcal{B}^2 \right) \left(\frac{1}{2}\|U_\varepsilon\|_{\Xspace}^2 + C_2 m^{-\frac{N-2s}{2}}-C_3 m^{-3s}  \right) - t^{\alpha+\beta} \mathcal{A}^\alpha \mathcal{B}^{\beta} \left( \frac{1}{2}\|u_\varepsilon\|_{L^{2_s^*}(\R^N)}^{2_s^*}- C_4 m^{-\frac{N}{2}} \right)}.
\end{align*}
Since by the hypothesis \eqref{non_att_hyp}, we have $S_{\alpha+\beta}(\Omega)=2^{-\frac{2s}{N}}S(N,s)$, using Theorem \ref{important relation} we obtain 
\begin{equation}\label{estimate-on-g}
\begin{split}
    \max\limits_{t>0} g(t) \leq&\frac{1}{2}\frac{s}{N}\frac{1}{(2_s^*)^{\frac{2}{2_s^*-2}}}\left[\left(\frac{\mathcal{A}}{\mathcal{B}}\right)^{\frac{2\beta}{\alpha+\beta}}+\left(\frac{\mathcal{B}}{\mathcal{A}}\right)^{\frac{2\alpha}{\alpha+\beta}}\right]^{\frac{2_s^*}{2_s^*-2}}[S(N,s)]^{\frac{2_s^*}{2_s^*-2}}+ C_2 m^{-\frac{N-2s}{2}} - C_3 m^{-3s}  + C_4 m^{-\frac{N}{2}}\\ 
    =&\frac{s}{N} \left(\frac{\widetilde{S}_{(\alpha,\beta)}(\Omega)}{(2_s^*)^{\frac{N-2s}{N}}} \right)^{\frac{N}{2s}} + C_2 m^{-\frac{N-2s}{2}} - C_3 m^{-3s} + C_4 m^{-\frac{N}{2}}.
\end{split}
\end{equation}}
{Then, by \eqref{est-5.7}, \eqref{5.5} and \eqref{estimate-on-g}, we get
\begin{align*}
\max\limits_{(u,v) \in Q_m^\varepsilon} J(u,v)
&\leq  \tilde{C} m^{- \frac{N}{2s}(N-2s)} + \frac{s}{N} \left(\frac{\widetilde{S}_{(\alpha,\beta)}(\Omega)}{(2_s^*)^{\frac{N-2s}{N}}} \right)^{\frac{N}{2s}} + C_2 m^{-\frac{N-2s}{2}} - C_3 m^{-3s} + C_4 m^{-\frac{N}{2}}\\
&\quad + \varrho (\mathcal{A} + \mathcal{B}) \left( C_5 m^{\frac{2s-N}{2}}\|U_\varepsilon\|_{\Xspace} + C_6 m^{-\frac{3(N-2s)}{4}}+ C_7 m^{-\frac{3(N-2s)}{4}-2s} \right).
\end{align*}
To conclude, we observe that, for $N>8s$, we have
\begin{align*}
    -3s> \max\left\{ -\frac{N}{2s}(N-2s) , -\frac{N-2s}{2}, -\frac{N}{2}, -\frac{3(N-2s)}{4}\right\}.
\end{align*}}
Hence, for $m$ large enough we get
\begin{align*}
    \max\limits_{(u,v) \in Q_m^\varepsilon} J(u,v) < \frac{s}{N} (2_s^*)^{-\frac{N-2s}{2s}}\left(\widetilde{S}_{(\alpha,\beta)}(\Omega) \right)^{\frac{N}{2s}}.
\end{align*}
\end{proof}
We recall that the sequence $\{(u_n,v_n)\} \subset E$ is called a (PS) sequence for \(J\) at level \(c\) if
\[
J(u_n,v_n) \to c \quad \text{and} \quad J'(u_n,v_n) \to 0 \text{ in } E' \quad\text{as}\ n \to +\infty.
\]
\begin{lemma}\label{boundedness}
Let $\{(u_n,v_n)\} \subset \mathbb{H}$ be a (PS) sequence for \(J\); then there exists \((u,v)\) such that, up to a subsequence, \((u_n,v_n) \rightharpoonup (u,v)\) in $\mathbb{H}$. Moreover, if \(J(u_n,v_n) \to c\) with \(0 < c < c_0\) and 
\begin{equation}\label{crit_level}
c_0:= \frac{s}{N} (2_s^*)^{-\frac{N-2s}{2s}}\left(\widetilde{S}_{(\alpha,\beta)}(\Omega) \right)^{\frac{N}{2s}},
\end{equation} 
then \((u,v) \neq (0,0)\).
\end{lemma}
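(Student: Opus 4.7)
The plan is to split the lemma into two independent claims: (i) every (PS) sequence is bounded in $\mathbb{H}$, whence a weak limit exists by reflexivity; and (ii) under $0<c<c_0$, this weak limit is nontrivial. Step (i) uses the standard algebraic identity that kills the critical nonlinearity, while (ii) is a contradiction argument based on the Sobolev-type inequality \eqref{Sobolev type constant on product space}.

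For boundedness, I would form the combination $J(u_n,v_n)-\frac{1}{2_s^*}\langle J'(u_n,v_n),(u_n,v_n)\rangle$. Since $\alpha+\beta=2_s^*$, the critical coupling cancels and one gets
\[
\frac{s}{N}\Big[\|(u_n,v_n)\|_{\mathbb{H}}^2 - \int_{\Omega}(A\mathbf{U}_n,\mathbf{U}_n)_{\R^2}\,\dx\Big] = c+o(1)+o(\|(u_n,v_n)\|_{\mathbb{H}}).
\]
Decomposing $(u_n,v_n)=(u_n^-,v_n^-)+(u_n^+,v_n^+)$ with $(u_n^\pm,v_n^\pm)\in[\mathbb{H}_k^\pm]^2$, the $L^2$-orthogonality of eigenfunctions for different eigenvalues makes the bracket diagonalise into a $+$ and a $-$ contribution. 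On the $+$ component, Lemma \ref{lemma6} together with $\mu_2<\lambda_{k+1}^s$ gives the coercive bound $\|(u_n^+,v_n^+)\|_{\mathbb{H}}^2-\int(A\mathbf{U}_n^+,\mathbf{U}_n^+)\,\dx\geq(1-\mu_2/\lambda_{k+1}^s)\|(u_n^+,v_n^+)\|_{\mathbb{H}}^2$. The $-$ component lives in the finite-dimensional space $[\mathbb{H}_k^-]^2$, where all norms are equivalent, so it suffices to control $\|(u_n^-,v_n^-)\|_{\mathbb{H}}$ separately; this is achieved by testing $\langle J'(u_n,v_n),(u_n^-,v_n^-)\rangle=o(\|(u_n^-,v_n^-)\|_{\mathbb{H}})$ and absorbing the critical coupling terms via Hölder and the Sobolev embedding. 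Combining both pieces forces $\|(u_n,v_n)\|_{\mathbb{H}}$ to remain bounded and yields $(u_n,v_n)\rightharpoonup(u,v)$ up to a subsequence.

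For nontriviality, assume by contradiction that $(u,v)=(0,0)$. By Rellich compactness, $u_n\to0$ and $v_n\to0$ strongly in $L^q(\Omega)$ for every $q<2_s^*$, hence $\int_{\Omega}(A\mathbf{U}_n,\mathbf{U}_n)_{\R^2}\,\dx\to0$. The two (PS) identities then reduce to
\[
\tfrac{1}{2}\|(u_n,v_n)\|_{\mathbb{H}}^2-\int_{\Omega}|u_n|^\alpha|v_n|^\beta\,\dx = c+o(1),\qquad \|(u_n,v_n)\|_{\mathbb{H}}^2 = 2_s^*\int_{\Omega}|u_n|^\alpha|v_n|^\beta\,\dx+o(1),
\]
so eliminating the critical integral gives $\|(u_n,v_n)\|_{\mathbb{H}}^2\to Nc/s=:L$ and $\int_{\Omega}|u_n|^\alpha|v_n|^\beta\,\dx\to L/2_s^*$. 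Inserting these limits into the Sobolev-type inequality
\[
\|(u_n,v_n)\|_{\mathbb{H}}^2\geq \widetilde{S}_{(\alpha,\beta)}(\Omega)\Big(\int_{\Omega}|u_n|^\alpha|v_n|^\beta\,\dx\Big)^{2/2_s^*}
\]
yields $L\geq \widetilde{S}_{(\alpha,\beta)}(\Omega)(L/2_s^*)^{2/2_s^*}$, which after solving for $L$ translates into $c=sL/N\geq c_0$, contradicting the hypothesis $c<c_0$. Hence $(u,v)\neq(0,0)$.

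The main obstacle I anticipate is the boundedness step: because the linear part is resonant on the finite-dimensional subspace $[\mathbb{H}_k^-]^2$ (we only assume $\mu_1\geq\lambda_k^s$), the global bracket $\|(u_n,v_n)\|_{\mathbb{H}}^2-\int(A\mathbf{U}_n,\mathbf{U}_n)\,\dx$ is not coercive on all of $\mathbb{H}$; coercivity must be recovered on $[\mathbb{H}_k^+]^2$ alone and the $[\mathbb{H}_k^-]^2$ part must be pinned down by an auxiliary test direction with delicate control of the critical coupling via Hölder and Sobolev estimates. Once this is in place, step (ii) is essentially algebraic and uses only Rellich compactness, the identities satisfied by (PS) sequences, and the Sobolev-type inequality for the product space.
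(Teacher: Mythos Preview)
Your argument for the second claim (nontriviality when $0<c<c_0$) coincides with the paper's: both assume $(u,v)=(0,0)$, drop the quadratic part via the compact embedding $\Hspace\hookrightarrow L^2(\Omega)$, feed the two reduced (PS) identities into the Sobolev--type inequality \eqref{Sobolev type constant on product space}, and reach the contradiction $c\geq c_0$.

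For the boundedness step you take a genuinely different route. The paper never touches the spectral splitting $[\mathbb{H}_k^-]^2\oplus[\mathbb{H}_k^+]^2$ and does not use the hypothesis $\mu_2<\lambda_{k+1}^s$ here; instead it forms $2J(u_n,v_n)-\langle J'(u_n,v_n),(u_n,v_n)\rangle$ to bound $\int_\Omega|u_n|^\alpha|v_n|^\beta\,\dx$, then rewrites
\[
\|(u_n,v_n)\|_{\mathbb H}^2=2J(u_n,v_n)+\int_\Omega(A\mathbf U_n,\mathbf U_n)_{\R^2}\,\dx+2\int_\Omega|u_n|^\alpha|v_n|^\beta\,\dx
\]
and controls the middle term by invoking the pointwise algebraic estimate \eqref{inequality-1}, so that $\mu_2(u^2+v^2)$ is dominated by $|u|^\alpha|v|^\beta$ plus a constant. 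This keeps the lemma formally independent of the location of $\mu_1,\mu_2$ in the spectrum, at the price of relying on a pointwise bound rather than on the variational structure. Your decomposition argument is closer to the resonant scalar literature (cf.\ \cite{Capozzi-1985,Servadei-2014}) and makes the role of $\mu_2<\lambda_{k+1}^s$ explicit; however, as you correctly anticipate, the $[\mathbb H_k^-]^2$ piece needs care: testing $J'(u_n,v_n)$ against $(u_n^-,v_n^-)$ alone produces critical remainders of order $\|(u_n,v_n)\|_{\mathbb H}^{2_s^*-1}\|(u_n^-,v_n^-)\|_{\mathbb H}$, which cannot simply be ``absorbed'' --- one usually closes the argument by a normalization--and--contradiction step (set $\widehat{(u_n,v_n)}=(u_n,v_n)/\|(u_n,v_n)\|_{\mathbb H}$ and pass to the limit) rather than a direct inequality.
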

\begin{proof}
Let $\{(u_n,v_n)\}$ be a \((PS)_c\) sequence, i.e.,
\begin{equation}\label{5.9}
\frac{1}{2} \|(u_n,v_n)\|^2_{\mathbb{H}} - \frac{1}{2} \int_{\Omega} (au_n^2 + 2b u_nv_n + cv_n^2)\, \dx - \int_{\Omega} |u_n|^{\alpha} |v_n|^{\beta} \, \dx = c + o(1)
\end{equation}
and
\begin{equation}\label{5.10}
\|(u_n,v_n)\|^2_{\mathbb{H}} - \int_{\Omega} (au_n^2 + 2b u_nv_n + cv_n^2)\, \dx - 2_s^* \int_{\Omega} |u_n|^{\alpha} |v_n|^{\beta} \, \dx = o(1). 
\end{equation}
From \eqref{5.9} and \eqref{5.10}, we obtain
\begin{equation}\label{5.11}
2J(u_n,v_n) - \langle J'(u_n,v_n), (u_n,v_n) \rangle = (2_s^* - 2) \int_{\Omega} |u_n|^{\alpha} |v_n|^{\beta} \, \dx \leq 2c + o(1).
\end{equation}
From \eqref{5.9}, \eqref{5.11} and \eqref{inequality-1}, we yield that
\begin{align*}
\|(u_n,v_n)\|^2_{\mathbb{H}} &= 2J(u_n,v_n) + \int_{\Omega} (au_n^2 + 2b u_nv_n + cv_n^2)\, \dx + 2 \int_{\Omega} |u_n|^{\alpha} |v_n|^{\beta} \, \dx \\
&\leq 2J(u_n,v_n) + \mu_2 \int_{\Omega} (u_n^2 + v_n^2) \, \dx + 2 \int_{\Omega} |u_n|^{\alpha} |v_n|^{\beta} \, \dx \\
&\leq C,
\end{align*}
and therefore we conclude that the sequence $(u_n,v_n)$ is bounded in $\mathbb{H}$. Then, there exists a subsequence again denoted by $\{(u_n,v_n)\}$ such that \((u_n,v_n) \rightharpoonup (u,v)\) weakly in $\mathbb{H}$. We claim that \((u,v) \neq (0,0)\). Assume by contradiction that \((u,v) = (0,0)\). We know that
\[
\langle J'(u_n,v_n), (u_n,v_n) \rangle = o(1).
\]
Since the embedding \(\Hspace \hookrightarrow L^2(\Omega)\) is compact, it follows that
\begin{equation}\label{5.12}
\|(u_n,v_n)\|^2_{\mathbb{H}} - 2_s^* \int_{\Omega} |u_n|^{\alpha} |v_n|^{\beta} \, \dx = o(1),
\end{equation}
using the definition of $\widetilde{S}_{(\alpha,\beta)}(\Omega)$, we obtain
\begin{align*}
    \|(u_n,v_n)\|^2_{\mathbb{H}} \geq \widetilde{S}_{(\alpha,\beta)}(\Omega) \left( \int_{\Omega} |u_n|^{\alpha} |v_n|^{\beta} \, \dx\right)^{\frac{2}{2_s^*}}.
\end{align*}
Hence
\[
\|(u_n,v_n)\|^2_{\mathbb{H}} - 2_s^* \left( \widetilde{S}_{(\alpha,\beta)}(\Omega)\right)^{- \frac{2_s^*}{2}} \|(u_n,v_n)\|^{2_s^*}_{\mathbb{H}} \leq o(1),
\]
thus
\[
\|(u_n,v_n)\|^2_{\mathbb{H}} \left( 1 - 2_s^* \left( \widetilde{S}_{(\alpha,\beta)}(\Omega)\right)^{- \frac{2_s^*}{2}}\|(u_n,v_n)\|^{2_s^* - 2}_{\mathbb{H}} \right) \leq o(1).
\]
If \(\|(u_n,v_n)\|_{\mathbb{H}} \to 0\), passing in the limit in \eqref{5.9} we reach a contradiction with the assumption $c>0$. Therefore, we deduce
\[
\left( 1 - 2_s^* \left( \widetilde{S}_{(\alpha,\beta)}(\Omega)\right)^{- \frac{2_s^*}{2}} \|(u_n,v_n)\|^{2_s^* - 2}_{\mathbb{H}} \right) \leq 0
\]
i.e.,
\[
\|(u_n,v_n)\|^2_{\mathbb{H}} \geq {2_s^*}^{-\frac{2}{2_s^*-2}} \left( \widetilde{S}_{(\alpha,\beta)}(\Omega)\right)^{ \frac{2_s^*}{2_s^*-2}}.
\]
From \eqref{5.12}, we have that
\begin{align*}
J(u_n,v_n) &= \left( \frac{1}{2} - \frac{1}{2_s^*} \right) \|(u_n,v_n)\|^2_{\mathbb{H}} + \frac{1}{2_s^*} \left( \|(u_n,v_n)\|^2_{\mathbb{H}} - 2_s^* \int_{\Omega} |u_n|^{\alpha} |v_n|^{\beta} dx + o(1) \right) \\
&= \frac{s}{N} \|(u_n,v_n)\|^2_{\mathbb{H}} + o(1) \\
&\geq \frac{s}{N} \left({2_s^*}^{-\frac{2}{2_s^*-2}} \left( \widetilde{S}_{(\alpha,\beta)}(\Omega)\right)^{ \frac{2_s^*}{2_s^*-2}} \right) + o(1),
\end{align*}
which contradicts the fact that \(c < \frac{s}{N} {2_s^*}^{-\frac{N-2s}{2s}} \left( \widetilde{S}_{(\alpha,\beta)}(\Omega)\right)^{\frac{N}{2s}}\). Thus \((u,v) \neq (0,0)\).
\end{proof}

We are now in position to state and prove the main result of this work
\begin{theorem}
Let $\Omega \subset \R^N, {N> 8s}$, be a bounded domain with smooth boundary. Then, the system \eqref{main problem} has a non-trivial solution provided $\Lambda_k \leq \mu_1 \leq \mu_2< \Lambda_{k+1}$ and $(\mathcal{H})$ holds.
\end{theorem}
\begin{proof}
From Lemma \ref{geometry-1} and Lemma \ref{geometry-2}, the functional $J$ satisfies the geometric conditions of the Rabinowitz Linking Theorem. From Lemma \ref{energy-threshold-estimate} and Lemma \ref{boundedness}, it is easy to verify that the $\rm (PS)_c$-condition holds for the functional $J$ below the critical threshold $c_0$ as given in Lemma \ref{boundedness}. Thus, the functional $J$ possesses at least one non-trivial critical point which is a solution to our system \eqref{main problem}.
\end{proof}

\section*{Acknowledgment}
\noindent R. Kumar acknowledges TIFR-Centre for Applicable Mathematics, India for the Institute Postdoctoral Fellowship to conduct this research.\\
A. Ortega is partially funded by Vicerrectorado de Investigación,
Transferencia del Conocimiento y Divulgación Científica of Universidad Nacional de Educación a Distancia under research project Talento Joven UNED 2025, Ref: 2025/00151/001.


\end{document}